\documentclass[12pt]{amsart}
\usepackage[totalwidth=480pt, totalheight=640pt]{geometry}
\usepackage{amsmath, amssymb, amsthm, amsfonts, %mathrsfs,
eucal, enumerate, layout}
\usepackage[numbers]{natbib}
\usepackage[normalem]{ulem}
\usepackage[usenames]{color}
\usepackage{verbatim}
\usepackage{soul}
\usepackage{mathtools}
\usepackage{graphicx}
\usepackage{hyperref}
\hypersetup{colorlinks=true,citecolor={red},linkcolor={blue},urlcolor={violet}}
\usepackage[new]{old-arrows}

\newcommand{\details}[1]{}

\newtheorem{theorem}{Theorem}[section]
\newtheorem*{theorem*}{Theorem}

\newtheorem*{corollary*}{Corollary}
\newtheorem{lemma}[theorem]{Lemma}
\newtheorem*{lemma*}{Lemma}

\newtheorem*{claim*}{Claim}

\newtheorem{proposition}[theorem]{Proposition}
\newtheorem*{proposition*}{Proposition}

\newtheorem*{conjecture*}{Conjecture}
\newtheorem{def-proposition}[theorem]{Definition-Proposition}
\theoremstyle{definition}

\newtheorem*{definition*}{Definition}
\newtheorem{remark}[theorem]{Remark}

\newtheorem*{example*}{Example}
\numberwithin{equation}{section}

\newcommand{\rme}{\mathrm {e}}
\newcommand{\rmi}{\mathrm {i}}

\newcommand{\ZZ}{\mathbb{Z}}
\newcommand{\QQ}{\mathbb{Q}}

\newcommand{\CC}{\mathbb{C}}
\newcommand{\GG}{\mathbb{G}}
\newcommand{\PP}{\mathbb{P}}

\newcommand{\Galmot}{{\mathcal{G}}{\mathrm{al}}_{\mathrm{mot}}}

\newcommand{\End}{\mathrm{End}}
\newcommand{\Hom}{\mathrm{Hom}}

\newcommand{\UR}{\mathrm{UR}}
\newcommand{\W}{\mathrm{W}}

\newcommand{\Lie}{\mathrm{Lie}\,}

\newcommand{\oQQ}{\overline{\QQ}}

\newcommand{\cE}{\mathcal{E}}

\begin{document}

\title[Algebraic independence of the exponential and Weierstrass $\wp$-functions]
{Algebraic independence of \\ the exponential and Weierstrass $\wp$-functions}

\author{Cristiana Bertolin}
\address{Dipartimento di Matematica, Universit\`a di Padova, Via Trieste 63, Padova}
\email{cristiana.bertolin@unipd.it}

\subjclass[2020]{11J85, 11J89, 18M25}

\keywords{1-motives, Tannakian categories, exponential function, Weierstrass $\wp\,$ function, Grothendieck-André period Conjecture}

\date{\today}

%\commby{}

%%% ----------------------------------------------------------------------

\begin{abstract}
Let $\Omega$ be a lattice in $\CC$ with algebraic invariants and complex
multiplication, let $\cE$ be the elliptic curve associated with $\Omega$,
and let $\wp$ be the Weierstrass function relative to $\Omega$. Set
$k:=\operatorname{End}(\cE)\otimes_{\ZZ}\QQ.$
We prove that if $t_1,\dots,t_s$ are $\QQ$-linearly independent
algebraic numbers and $p_1,\dots,p_n$ are $k$-linearly independent
algebraic numbers, then the $s+n$ numbers
\[
\rme^{t_1},\dots,\rme^{t_s},
\wp(p_1),\dots,\wp(p_n)
\]
are algebraically independent over $\oQQ$.
The proof uses the Tannakian description of the Lie algebra of the
unipotent radical of the $1$-motive associated with these points.
\end{abstract}

%%% ----------------------------------------------------------------------

\maketitle

%%% ----------------------------------------------------------------------

\tableofcontents

\section*{Introduction}

Let $\oQQ$ be the algebraic closure of the field $\QQ$ of rational
numbers. Let $\Omega=\ZZ\omega_1+\ZZ\omega_2$ be a lattice in $\CC$ with
elliptic invariants. Let $\cE$ be the elliptic curve
associated with $\Omega$ and denote by
$k:=\End(\cE)\otimes_{\ZZ}\QQ $
its endomorphism field. Let $\wp$ be the Weierstrass function relative
to the lattice $\Omega$.

 We prove the following algebraic independence theorem:

\begin{theorem}\label{teo:main}
	Let $\Omega$ be a lattice in $\CC$ with algebraic invariants and
	complex multiplication. If
	\begin{itemize}
		\item $t_1,\dots,t_s$ are $\QQ$--linearly independent algebraic
		numbers, and
		\item $p_1,\dots,p_n$ are $k$--linearly independent algebraic
		numbers,
	\end{itemize}
	then the $s+n$ numbers
	\[
	\rme^{t_1},\dots,\rme^{t_s},
	\wp(p_1),\dots,\wp(p_n)
	\]
	are algebraically independent over $\oQQ$.
\end{theorem}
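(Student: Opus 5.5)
The plan is to deduce Theorem~\ref{teo:main} from the Grothendieck--Andr\'e period conjecture for $1$-motives, which I take as the standing hypothesis suggested by the keywords. Unconditionally, only the case $n=0$ (Lindemann--Weierstrass) is known. The conjecture will be applied to a $1$-motive whose field of definition contains the numbers $\rme^{t_i}$ and $\wp(p_j)$. I will then show that the dimension of its motivic Galois group equals the number of generators of the field generated by its periods, so that all these generators are forced to be algebraically independent.

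\textbf{Step 1: the $1$-motive.} First, no $p_j$ lies in $\Omega$. Indeed, each $p_j$ is a nonzero algebraic number, while nonzero periods of $\cE$ are transcendental by Schneider's theorem. So $P_j:=(\wp(p_j),\wp'(p_j))\in\cE(\CC)$ is defined. Set
\[
M=[u:\ZZ^{s+n}\to \GG_m^s\times \cE^n],
\]
with $u(e_i)=\rme^{t_i}$ in the $i$-th factor of $\GG_m^s$ for $i\le s$, and $u(e_{s+j})=P_j$ in the $j$-th factor of $\cE^n$. It is defined over
\[
K=\oQQ\bigl(\rme^{t_i},\wp(p_j),\wp'(p_j)\bigr).
\]
Its periods are the following.
\begin{itemize}
\item The logarithms $t_i$ and $p_j$ of the points; these are algebraic.
\item $2\pi\rmi$.
\item $\omega_1,\omega_2,\eta_1,\eta_2$.
\item The quasi-periods $\zeta(p_j)$, up to algebraic combinations of $p_j$ and $\wp'(p_j)$.
\end{itemize}
Because of complex multiplication, $\omega_2$ and $\eta_2$ lie in the $\oQQ$-span of $\omega_1$ and $\eta_1$, using Legendre's relation and the CM relations. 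Hence
\[
K(\text{periods}) = \oQQ\bigl(\rme^{t_i},\wp(p_j),\wp'(p_j),\pi,\omega_1,\eta_1,\zeta(p_j)\bigr).
\]
Since each $\wp'(p_j)$ is algebraic over $\oQQ(\wp(p_j))$, the field $K(\text{periods})$ is algebraic over a field with at most $s+2n+2$ generators.

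\textbf{Step 2: the motivic Galois group.} The group $\Galmot(M)$ is an extension of $\Galmot(\ZZ(1)\oplus\cE)$ by the unipotent radical $\UR(M)$. For CM $\cE$, the group $\Galmot(\ZZ(1)\oplus\cE)$ is a $2$-dimensional torus; it is the torus for which $k^\times$ provides the $\cE$-part, and the $\GG_m$ weight is already accounted for. Using the Tannakian description of $\Lie\UR(M)$ recalled earlier in the paper, $\Lie\UR(M)$ is generated by the smallest abelian subvariety, or rather extension, of $\GG_m^s\times\cE^n$ containing a multiple of the point $u(\ZZ^{s+n})$. I need $\dim \UR(M)=s+2n$. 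This splits into three checks.
\begin{itemize}
\item \emph{Toric part.} The $\QQ$-linear independence of the $t_i$ means that $\rme^{t_1},\dots,\rme^{t_s}$ are multiplicatively independent. So the toric component is all of $\GG_m^s$, contributing $s$.
\item \emph{Elliptic part.} The $k$-linear independence of the $p_j$ means that $P_1,\dots,P_n$ are $\End(\cE)$-linearly independent modulo torsion. So the abelian component is all of $\cE^n$, contributing $2n=n\dim_\QQ k$.
\item \emph{No Cartier-dual contribution.} $\Hom(\cE,\GG_m)=0$ and there is no extra relation in $\mathrm{Ext}^1(\cE,\GG_m)$ coming from $M$, since the $\GG_m$ and $\cE$ parts live on separate generators of the lattice. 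Hence no additional $\ZZ(1)$-piece appears in $W_{-2}$ beyond what the $t_i$ give.
\end{itemize}
Thus $\dim\Galmot(M)=s+2n+2$.

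\textbf{Step 3: conclusion and main obstacle.} The period conjecture gives
\[
\mathrm{trdeg}_{\QQ}\,K(\text{periods})\ge s+2n+2.
\]
Comparing with Step 1, the $s+2n+2$ numbers $\rme^{t_i},\wp(p_j),\pi,\omega_1,\eta_1,\zeta(p_j)$ are algebraically independent over $\oQQ$. In particular the subfamily $\rme^{t_1},\dots,\rme^{t_s},\wp(p_1),\dots,\wp(p_n)$ is. The main obstacle is Step 2: computing $\Lie\UR(M)$ exactly, and in particular ruling out a drop in dimension caused by mixing between the toric and elliptic parts. I would handle this by showing that $\Lie\UR(M)$ decomposes as the direct sum of the unipotent radicals of $[\ZZ^s\to\GG_m^s]$ and $[\ZZ^n\to\cE^n]$. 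The bracket term in $W_{-2}$ vanishes because the $\GG_m$-points and the $\cE$-points have disjoint supports in the lattice, so the corresponding biextension is trivial.
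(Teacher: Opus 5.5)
Your argument is conditional, and that is a genuine gap. Step~3 rests on the generalized Grothendieck--Andr\'e period conjecture, $\mathrm{trdeg}_{\QQ}K(\text{periods}(M))\geq\dim\Galmot(M)$, which is open. The theorem, however, is stated unconditionally. So you have only shown that it follows from that conjecture, which the paper's introduction already records.

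This route cannot be made unconditional with present tools, because it proves far more than the theorem. Your conclusion includes the algebraic independence of $\pi$ and $\rme^{t_1}$, so taking $t_1=1$ it would settle whether $\rme$ and $\pi$ are algebraically independent, which is open.

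Your claim that only the case $n=0$ is known unconditionally is also wrong. The case $s=0$ is the Philippon--W\"ustholz theorem, and the paper's proof uses Lindemann--Weierstrass and Philippon--W\"ustholz as unconditional inputs. There, your Step~2 computation, $\Lie\UR(M)\simeq B\oplus Z(1)$ with zero bracket, is used not for a dimension count but to \emph{separate} the two families:
\begin{itemize}
\item the square-zero action of $B\oplus Z(1)$ on $\operatorname{Gr}^{\W}_*(M)$ is used to show that a $\rho$-compatible relation has the form $F_B+F_Z+c$;
\item the splitting gives $I_u^\rho=I_b^\rho+I_z^\rho$;
\item the two summands are killed by Philippon--W\"ustholz and Lindemann--Weierstrass respectively.
\end{itemize}
An unconditional mechanism ruling out relations that mix the exponential and elliptic values is the idea your proposal lacks. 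If you try to borrow the paper's, note that it hinges on one assertion, made via the equivalence induced by $\operatorname{Gr}^{\W}_*$: that these Tannakian $\rho$-compatible relations account for all polynomial relations among the actual numbers. That is a motivic-to-numerical bridge of the same nature as the period conjecture itself, so it is the step to scrutinize.

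Even as a conditional argument, Step~3 needs repair. The list $\rme^{t_i},\wp(p_j),\pi,\omega_1,\eta_1,\zeta(p_j)$ has $s+2n+3$ members, not $s+2n+2$, and it is not algebraically independent. As you note, $\omega_2,\eta_2\in\oQQ\omega_1+\oQQ\eta_1$, so Legendre's relation $\omega_2\eta_1-\omega_1\eta_2=2\pi\rmi$ puts $\pi$ in $\oQQ[\omega_1,\eta_1]$. Dropping $\pi$ (or $\eta_1$) restores the count $s+2n+2=\dim\Galmot(M)$. The conclusion for $\rme^{t_1},\dots,\rme^{t_s},\wp(p_1),\dots,\wp(p_n)$ then follows from the conjecture as you intended.

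Two smaller points:
\begin{itemize}
\item Getting $B=\cE^n$ needs $(\Omega\otimes_{\ZZ}\QQ)\cap(kp_1+\cdots+kp_n)=0$, not just $p_j\notin\Omega$. This follows from the same transcendence of nonzero periods, since every element of $kp_1+\cdots+kp_n$ is algebraic.
\item The bracket vanishes because $\GG_m^s\times\cE^n$ is a split extension, so $B$ lies in the component of $W_{-1}$ on which the Weil-pairing product is zero. It does not vanish because the generators have disjoint supports.
\end{itemize}
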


Theorem~\ref{teo:main} simultaneously extends the following two
classical theorems:

\begin{itemize}
	\item \emph{The Lindemann--Weierstrass Theorem} (1885): if
	$t_1,\dots,t_s$ are $\QQ$--linearly independent algebraic numbers,
	then the numbers
	$\rme^{t_1},\dots,\rme^{t_s}$ are algebraically independent.
	
	\item \emph{The Philippon--W\"ustholz Theorem} (1983): let $\wp$
	be a Weierstrass function with algebraic invariants and complex
	multiplication. If $p_1,\dots,p_n$ are $k$--linearly independent
	algebraic numbers, then the numbers
	$\wp(p_1),\dots,\wp(p_n)$ are algebraically independent.
\end{itemize}

Theorem~\ref{teo:main} is a special case of the semi-elliptic
LW Conjecture for linearly independent complex numbers
\cite[Conjecture~0.6]{Bsubmitted}, or of the split semi-elliptic
LW Conjecture \cite[Conjecture~2.3]{BW}. These conjectures are, in
turn, consequences of the semi-elliptic Conjecture
\cite[Conjecture~0.2]{Bsubmitted}, which is equivalent to the
Grothendieck--Andr\'e period Conjecture applied to $1$-motives whose
abelian part is a power of an elliptic curve, and is expected to
encompass all reasonable statements concerning the values of the
exponential function, the Weierstrass $\wp$-- and $\zeta$--functions,
and Serre's $f_q$--functions.

In the statement of Theorem~\ref{teo:main}, we assume that the lattice
has complex multiplication in order to apply the
Philippon--W\"ustholz Theorem. We emphasize, however, that the complex
multiplication assumption is not used in the argument itself: it
enters only through the algebraic independence result provided by the
Philippon--W\"ustholz Theorem. Thus, if the corresponding algebraic
independence statement for the values
$\wp(p_1),\dots,\wp(p_n)$
were known without the complex multiplication assumption, the proof
given below would apply verbatim and yield the same conclusion in the
general case. This is also consistent with the
Grothendieck--Andr\'e period Conjecture, which predicts the
corresponding algebraic independence statement without the complex
multiplication hypothesis (see
\cite[Conjecture~0.6]{Bsubmitted} or
\cite[Conjecture~2.3]{BW}).

Consider the $1$-motive
\[
M=
[u:\ZZ\longrightarrow\GG_m^s\times\cE^n],
\qquad
u(1)=
(\rme^{t_1},\dots,\rme^{t_s},P_1,\dots,P_n),
\]
where
$ P_i=\exp_{\cE}(p_i)
=
[\wp(p_i):\wp'(p_i):1]
\in\cE(\CC)\subseteq\PP^2(\CC).$
The motivic Galois group $\Galmot(M)$ fits into the exact sequence
\[
0\longrightarrow
\UR(M)
\longrightarrow
\Galmot(M)
\longrightarrow
\Galmot(\cE)
\longrightarrow0,
\]
where $\UR(M)$ is its unipotent radical and $\Galmot(\cE)$ is the
motivic Galois group of $\cE$, i.e.\ the maximal reductive quotient
of $\Galmot(M)$.

Let $B$ be the smallest abelian subvariety, modulo isogenies, of
$\cE^n$ containing a multiple of the point
$(P_1,\dots,P_n)\in\cE^n(\CC)$, and let $Z(1)$ be the smallest
subtorus of $\GG_m^s$ containing the point
$(\rme^{t_1},\dots,\rme^{t_s})\in\GG_m^s(\CC)$.
According to Proposition~\ref{theorem de B03}, the Lie algebra of the
unipotent radical of $M$ splits as
\[
\Lie\UR(M)\simeq B\oplus Z(1).
\]

This splitting reflects the complete separation of the abelian and
toric parts of the unipotent radical and is the key structural
ingredient in the proof. Using the Tannakian symmetric algebra associated with
$\Lie\UR(M)$ and the square-zero property of its action on the
associated graded $1$-motive, we first show that any polynomial
relation between the two families
$\{\rme^{t_j}\}_{j=1,\dots,s}$ and
$\{\wp(p_i)\}_{i=1,\dots,n}$
is necessarily of degree at most one. Thus every such relation is the sum, up to a constant, of an
elliptic part and a toric part, each of degree at most one. The Tannakian decomposition induced by the splitting
$\Lie\UR(M)=B\oplus Z(1)$ then separates these two parts, which are
ruled out by the Philippon--W\"ustholz and Lindemann--Weierstrass
theorems, respectively.

%---------------------------------------------------------
%\section*{aknowledgement}

%--------------------------------------------------------------
\section*{Notation and conventions}

Throughout the paper, we use lower-case letters for elliptic
logarithms of points of $\cE(\CC),$ which are denoted
by the corresponding capital letters. Thus, if
$P=\exp_{\cE}(p)$, then, under the Weierstrass projective embedding,
$P=[\wp(p):\wp'(p):1]$.

For each $i=1,\dots,n$, let
$P_i=\exp_{\cE}(p_i)=[\wp(p_i):\wp'(p_i):1]$, with $p_i\in\CC\setminus\Omega$.
We have
$x(P_i)=\wp(p_i)$, where
$x\in\overline{\QQ}(\cE)$ denotes the Weierstrass $x$-coordinate.
Consequently, a polynomial
$F(X_1,\dots,X_n,Y_1,\dots,Y_s)$ in the values
$\wp(p_1),\dots,\wp(p_n),\rme^{t_1},\dots,\rme^{t_s}$
may be regarded as the rational function
\[
F(x_1,\dots,x_n,Y_1,\dots,Y_s)
\]
on $\cE^n\times\GG_m^s$, where $x_i$ denotes the pull-back of the
Weierstrass $x$-coordinate from the $i$-th factor of $\cE^n$.
In particular,
$
F\bigl(\wp(p_1),\dots,\wp(p_n),
\rme^{t_1},\dots,\rme^{t_s}\bigr)
=
F\bigl(x(P_1),\dots,x(P_n),
\rme^{t_1},\dots,\rme^{t_s}\bigr).
$
%The substitution
%\[
%X_i\longmapsto x_i,\qquad Y_j\longmapsto Y_j
%\]
%induces an injective homomorphism
%\[
%\overline{\QQ}[X_1,\dots,X_n,Y_1,\dots,Y_s]
%\hookrightarrow
%\overline{\QQ}(\cE^n\times\GG_m^s).
%\]
%Indeed, the Weierstrass coordinate
%$x:\cE\to\PP^1$ is dominant, and hence so is
%\[
%(x_1,\dots,x_n):
%\cE^n\longrightarrow(\PP^1)^n.
%\]
%Consequently, the functions
%$x_1,\dots,x_n,Y_1,\dots,Y_s$
%are algebraically independent over $\overline{\QQ}$.

%In particular, if
%\[
%F(x_1,\dots,x_n,Y_1,\dots,Y_s)
%\]
%is constant as a rational function on
%$\cE^n\times\GG_m^s$, then the original polynomial
%\[
%F(X_1,\dots,X_n,Y_1,\dots,Y_s)
%\]
%is constant.
%Thus, a polynomial relation among
%$\wp(p_1),\dots,\wp(p_n),\rme^{t_1},\dots,\rme^{t_s}$
%may be viewed as the vanishing at
%$(\rme^{t_1},\dots,\rme^{t_s},P_1,\dots,P_n)$
%of the corresponding rational function on
%$\cE^n\times\GG_m^s$, obtained by replacing each $\wp(p_i)$ with the
%Weierstrass coordinate function $x_i$.

We shall work throughout under the following assumptions:
\begin{itemize}
	\item $\Omega$ is a lattice in $\CC$ with algebraic invariants
	and complex multiplication;
	\item $t_1,\dots,t_s$ are $\QQ$--linearly independent algebraic
	numbers;
	\item $p_1,\dots,p_n$ are $k$--linearly independent algebraic
	numbers.
\end{itemize}

Lindemann's theorem on the transcendence of $\pi$ implies that, since
$t_1,\dots,t_s$ are algebraic,
\[
2\pi\rmi\notin\QQ t_1+\cdots+\QQ t_s.
\]
Hence the $\QQ$-linear independence of $t_1,\dots,t_s$ is equivalent
to the multiplicative independence of
$\rme^{t_1},\dots,\rme^{t_s}$.
Similarly, \cite[Corollary~2.6]{BW} implies that the non-zero poles of
a Weierstrass $\wp$-function with algebraic invariants are
transcendental. Consequently, since $p_1,\dots,p_n$ are algebraic,
\[
\Omega\cap(kp_1+\cdots+kp_n)=\{0\}.
\]
It follows that the $k$-linear independence of
$p_1,\dots,p_n$ is equivalent to the $k$-linear independence of the
corresponding points $P_1,\dots,P_n$.

We follow closely the notation and conventions of
\cite{B03} and \cite{D90}.

 %-------------------------------------------------
 
 \section{The Lie algebra of the unipotent radical of $\Galmot (M)$}\label{decomposition}

Consider the 1-motive $M =[u:\ZZ  \rightarrow  \GG_m^s  \times \cE^n],$ with 
\[u(1)= (\rme^{t_1}, \dots, \rme^{t_s},P_1, \dots, P_n ) \in \GG_m^s  \times \cE^n (\CC).\]
The motivic Galois group $\Galmot (M)$ fits into 
the following exact sequence 
\begin{equation*}
	0 \longrightarrow \UR(M) \longrightarrow \Galmot (M) \longrightarrow \Galmot (\cE) \longrightarrow 0
\end{equation*} 
where $\UR(M)$ is its unipotent radical and $\Galmot (\cE)$ is the motivic Galois group of $\cE$, i.e. its maximal reductive quotient. Let 
\[B\]
be 
the smallest abelian sub-variety (modulo isogenies) of $\cE^n$
which contains a multiple of the point $(P_1,\dots,P_n) \in \cE^n (\CC).$  With our hypothesis we have that $\dim	B= n.$  
Let
\[Z(1)\]
be the smallest sub-torus of $\GG_m^{s}$ which contains the point $( \rme^{t_1},\dots, \rme^{t_s}) \in \GG_m^s(\CC).$  With our hypothesis we have that $\dim	Z(1)= s.$

Let
$\langle M\rangle^\otimes$ be the Tannakian category 
generated by $M$. The unit object of $\langle M\rangle^\otimes$ is the $1$-motive $
\ZZ(0)=[\ZZ \rightarrow 0].$
We denote by $
M^{\vee} \cong \underline{\Hom}(M,\ZZ(0)) $
the dual of the $1$-motive $M$, and by
$
\operatorname{ev}_M:
M\otimes M^{\vee}\rightarrow \ZZ(0)$
and
$
\delta_M:
\ZZ(0)\rightarrow M^{\vee}\otimes M
$
the morphisms in $\langle M\rangle^\otimes$ characterizing it
(cf.~\cite[(2.1.2)]{D90}).
The Cartier dual of $M$ is the $1$-motive
$
M^*=M^{\vee}\otimes\ZZ(1). $

Set  $\widetilde M \cong {\mathrm Gr}_{*}^{\W}(M)$ i.e. 
$ \widetilde M=\ZZ \oplus \cE^n \oplus \GG_m^s$
and consider the split $1$-motive
$E=W_{-1}\bigl(\underline{\End}(\widetilde M)\bigr)$ of weights $ \leqslant -1$
whose only non-zero graded pieces are
\[
\begin{aligned}
	E_{-1}&= \cE^n \otimes \cE^{* s},\\
	E_{-2}&= \GG_m^{s}.
\end{aligned}
\]
As observed in \cite[\S3.1]{B03}, composition of endomorphisms
endows the motive $E$ with a natural ring structure
$
P:E\otimes E\rightarrow E,
$
whose only non-zero component is
\[
P:E_{-1}\otimes E_{-1}\longrightarrow E_{-2},
\]
induced by the motivic Weil pairing
$\cE\otimes\cE^*\rightarrow\GG_m$.
Antisymmetrizing the product $P$ yields a Lie bracket on $E$.
Thus $E$ carries the structure of a Lie algebra $(E,[\, ,\,])$.

Let $x$ and $y_1^\vee,\ldots,y_s^\vee$ denote the standard bases of
\(\ZZ\) and of the character group \(X^*(\GG_m^{s}) :=\operatorname{Hom}(\GG_m^{s},\mathbb G_m))=\ZZ^s\) of the torus $\GG_m^s$, respectively.
The motive $E$ acts
on $\widetilde M$ via the morphism $ \rho=(\alpha_1,\alpha_2,\gamma):
E\otimes\widetilde M \rightarrow\widetilde M $
in $\langle M\rangle^\otimes$ defined by
\begin{equation} \label{eq:action}
	\begin{aligned}
		\alpha_1 =(\alpha_{1i}(-,-))_i \quad \mathrm{with} \quad \alpha_{1i} :& \cE^n \otimes  {\mathrm Gr}_{0}^{\W}(\widetilde M) \longrightarrow	\cE,\\
		\alpha_2:& \cE^{* n s} \otimes {\mathrm Gr}_{-1}^{\W}(\widetilde M)	\longrightarrow	\GG_m^{sn},\\
		\gamma \otimes X^*(\GG_m^{s}) = (\gamma_{j}(-,-,y_j^\vee))_j \quad \mathrm{with} \quad \gamma_{j}:& \GG_m^{s}\otimes {\mathrm Gr}_{0}^{\W}(\widetilde M) \otimes X^*(\GG_m^{s})	\longrightarrow	 \GG_m,
	\end{aligned}
\end{equation}
where the first and the third morphisms are  projections,
whereas the second one is induced by the Weil pairing $ \cE \otimes \cE^{*} \rightarrow \GG_m.$
By \cite[Lemma 3.3 (2)]{B03} the morphism
$\rho=(\alpha_1,\alpha_2,\gamma):
E\otimes\widetilde M
\rightarrow
\widetilde M
$ endows $\widetilde M$ with the structure of a module over the Lie algebra $(E,[\cdot,\cdot]).$

\begin{lemma}\label{RingLieStructure}
	\begin{enumerate}
		\item $(B,Z(1),[\cdot,\cdot])$ is the smallest Lie
		subalgebra of $(E,[\cdot,\cdot])$ containing the data required to
		recover the Tannakian category generated by the $1$-motive $M$ from its associated graded object 
		$\widetilde M \cong \operatorname{Gr}^{\W}_{*}(M).$
		\item $B\oplus Z(1)$ is a square-zero algebra.
		\item $(B,Z(1),[\cdot,\cdot])$ is an abelian Lie algebra.
	\end{enumerate}
\end{lemma}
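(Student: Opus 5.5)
The plan is to read each item off the structure of $E$ and its Lie algebra structure, using \cite[\S3]{B03}, where the Lie algebra $\Lie\UR(M)$ is described as the smallest Lie subalgebra of $E$ carrying the extension data of $M$. I would prove the items in the order (1), (2), (3), since (3) follows formally from (2).

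\emph{Item (1).} By \cite[Theorem~0.1, \S3]{B03}, $\Lie\UR(M)$ is the smallest Lie subalgebra $(B', Z'(1), [\cdot,\cdot])$ of $(E,[\cdot,\cdot])$ with two properties. Its weight $-1$ part $B'\subseteq E_{-1}$ contains (a multiple of) the point $(P_1,\dots,P_n)$ describing the extension of $\ZZ$ by $\cE^n$. Its weight $-2$ part $Z'(1)\subseteq E_{-2}$ contains $(\rme^{t_1},\dots,\rme^{t_s})$ and the image under $P$ of $B'\otimes B'$. The action $\rho$ of this subalgebra on $\widetilde M$ recovers $M$, and hence $\langle M\rangle^\otimes$, from $\widetilde M$.

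I then identify the pieces. The weight $-1$ part is by definition the smallest abelian subvariety containing a multiple of the point, namely $B$. For the weight $-2$ part I use Proposition~\ref{theorem de B03}: since $M$ has trivial Cartier-dual lattice part ($\cE^{*s}$ enters $E_{-1}$ only formally, and the toric extension data is trivial), the bracket $[B,B]$ lands in $0$. The torus part is therefore just the smallest subtorus containing $(\rme^{t_1},\dots,\rme^{t_s})$, namely $Z(1)$. Minimality follows from the minimality of $B$ and $Z(1)$ among subobjects containing the respective points.

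\emph{Item (2).} I would show that $P$ restricted to $(B\oplus Z(1))^{\otimes 2}$ is zero. Since $P$ is nonzero only on $E_{-1}\otimes E_{-1}\to E_{-2}$, it suffices to check $P(B\otimes B)=0$. This product is computed by the Weil pairing between the $\cE^n$-factor of one element and the $\cE^{*}$-factor of the other. The $\cE^*$-component of $B$ is trivial, because $M$ has no nontrivial $\cE^*$-part coming from a lattice of Cartier dual type ($M^*$ has lattice part $0$ in weight $0$ beyond the torus characters). Hence $B$ sits inside $\cE^n\otimes(\text{trivial})$ and the pairing vanishes. Products involving $Z(1)$ vanish for weight reasons, since there is nothing in weight $\le -3$.

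\emph{Item (3).} The bracket is the antisymmetrization $[a,b]=P(a,b)-P(b,a)$, so by (2) it is identically zero.

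The main obstacle is the precise vanishing $P(B\otimes B)=0$. This requires carefully tracking which factor of $E_{-1}=\cE^n\otimes\cE^{*s}$ the point $u(1)$ actually populates, and verifying via \cite[Lemma~3.3]{B03} that the action $\alpha_2$ is trivial on the relevant piece. I would first verify this directly on the explicit formulas \eqref{eq:action}.
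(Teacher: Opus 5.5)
Your proposal is essentially the paper's proof: (2) is the observation that $B$ lies in the $\cE^n$-component of $E_{-1}$, so the only non-zero product $P\colon E_{-1}\otimes E_{-1}\to E_{-2}$ vanishes on $B\otimes B$, since it pairs that component with the $\cE^{*}$-component via the Weil pairing, and everything involving $Z(1)$ dies for weight reasons; (3) is its antisymmetrization; and (1) is a citation of \cite[\S3]{B03}. Two small fixes are needed: in (1) do not appeal to Proposition~\ref{theorem de B03}, because the paper proves its part (2) from item (3) of this very lemma, so prove (2) first and use it to kill the $[B,B]$ contribution to the weight $-2$ part; and the $\cE^{*}$-component vanishes simply because $B\subseteq\cE^n$ by definition, reflecting that $\GG_m^s\times\cE^n$ is a split extension of $\cE^n$ by $\GG_m^s$, not because the lattice of $M^*$ is trivial (it is $\ZZ^s$; only its map to $\cE^{*n}$ is zero).
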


\begin{proof}
	(1) This follows from \cite[\S3.7]{B03}.
	
	(2) Since $B\subseteq\cE^n$, the restriction to $B\oplus Z(1)$
	of the product
	$
	P:E_{-1}\otimes E_{-1}\rightarrow E_{-2}
	$
	vanishes identically.
	
	(3) The Lie bracket is obtained by antisymmetrizing $P$ and is
	therefore trivial by (2).
\end{proof}

Recalling that the abelian variety $B$ is contained in $\cE^n$,
the restriction $ \rho=(\alpha_1,\alpha_2,\gamma):
(B,Z(1),[\,,\,]) \otimes\widetilde M \rightarrow\widetilde M $ of the action \eqref{eq:action} to the Lie algebra
$(B,Z(1),[\,,\,])$  is 
\begin{equation}	
	\begin{aligned}\label{eq:action2}
		\alpha_{1i}(P_1, \dots,P_n,x)&=P_i,\\
		\alpha_2&=0,\\
		\gamma_j( \rme^{t_1},\dots, \rme^{t_s},x,y_j^\vee)&= \rme^{t_j}.
	\end{aligned}
\end{equation}
Explicitly
$\rho(P_1, \dots,P_n,\rme^{t_1},\dots, \rme^{t_s},x)=  u(1)= (\rme^{t_1}, \dots, \rme^{t_s},P_1, \dots, P_n ).$

\begin{remark}
	The point
	$
	\rho(P_1,\dots,P_n,\rme^{t_1},\dots,\rme^{t_s},	x)
	$
	determines the $1$-motive
	$
	M=[u:\ZZ\rightarrow\cE^n\times\GG_m^s].
	$
	More generally, let $p'_1,\dots,p'_n$ be obtained from
	$p_1,\dots,p_n$ by an invertible $k$-linear change of basis, and let
	$t'_1,\dots,t'_s$ be obtained from $t_1,\dots,t_s$ by an invertible
	$\QQ$-linear change of basis. In particular, all the $p'_i$ and
	$t'_j$ are algebraic. Their classes form, respectively, a $k$-basis
	of
	$
	\langle p_1,\dots,p_n\rangle_k
	\subset
	\CC/(\Omega\otimes_{\ZZ}\QQ)
	$
	and a $\QQ$-basis of
	$
	\langle t_1,\dots,t_s\rangle_{\QQ}
	\subset
	\CC/2\pi\rmi\QQ.
	$
	We obtain another point
	$
	u'(1)=
	(\rme^{t'_1},\dots,\rme^{t'_s},P'_1,\dots,P'_n)
	\in
	\GG_m^s\times\cE^n (\CC),
	$
	and hence another $1$-motive
	$
	M'=[u':\ZZ\rightarrow\GG_m^s\times\cE^n].
	$
	Although $M'$ need not be isomorphic to $M$, the two $1$-motives
	generate the same Tannakian category
	(see \cite[Remark~3.9(1)]{B03}).
\end{remark}

\begin{lemma}\label{uu'=0}
	For all $u,u'\in  B  \oplus Z(1), $ we have that $\rho(u)\rho(u')=0.$
\end{lemma}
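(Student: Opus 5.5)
We read $\rho(u)$ as the endomorphism $\rho(u,-)\colon \widetilde M\to\widetilde M$ of the graded $1$-motive, and $\rho(u)\rho(u')$ as the composite $\rho(u)\circ\rho(u')$. The claim is then that any two operators coming from $B\oplus Z(1)$ compose to zero.

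The plan is a weight argument combined with the explicit form \eqref{eq:action2} of the restricted action. For $u=(b,z)\in B\oplus Z(1)$, the operator $\rho(u)$ on $\widetilde M=\ZZ\oplus\cE^n\oplus\GG_m^s$ has at most three possibly nonzero components, read off from \eqref{eq:action2}:
\begin{itemize}
\item $\alpha_1$ sends the weight-$0$ piece $\ZZ=\operatorname{Gr}^{\W}_0$ to $\cE^n=\operatorname{Gr}^{\W}_{-1}$ by $x\mapsto b$;
\item $\gamma$ sends $\ZZ$ to $\GG_m^s=\operatorname{Gr}^{\W}_{-2}$ by $x\mapsto z$;
\item $\alpha_2$ would send $\operatorname{Gr}^{\W}_{-1}$ to $\operatorname{Gr}^{\W}_{-2}$, but it vanishes identically on $B\oplus Z(1)$ by \eqref{eq:action2}.
\end{itemize}
So the first step is to record that the image of $\rho(u)$ lies in $\operatorname{Gr}^{\W}_{-1}\oplus\operatorname{Gr}^{\W}_{-2}$. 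The second step is to record that $\rho(u)$ kills $\operatorname{Gr}^{\W}_{-1}\oplus\operatorname{Gr}^{\W}_{-2}$: on $\cE^n$ because $\alpha_2=0$, and on $\GG_m^s$ because $E$ has weights $\le -1$ and $\widetilde M$ has no graded piece of weight $<-2$.

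Composing these two facts gives the claim. For any $u'$, the operator $\rho(u')$ lands in $\operatorname{Gr}^{\W}_{-1}\oplus\operatorname{Gr}^{\W}_{-2}$, and $\rho(u)$ kills that subobject, so $\rho(u)\circ\rho(u')=0$. One can also see this from the weight bookkeeping directly. A composite of two operators of weight $\le -1$ has weight $\le -2$, so it can only map $\operatorname{Gr}_0$ to $\operatorname{Gr}_{-2}$. That component factors as $\operatorname{Gr}_0\to\operatorname{Gr}_{-1}\to\operatorname{Gr}_{-2}$, and its second arrow is $\alpha_2$, which is zero.

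As a consistency check, this matches Lemma~\ref{RingLieStructure}(2). The product $P$ on $E$ is composition of endomorphisms, and its restriction to $B\oplus Z(1)$ vanishes. Since $\rho$ is the module structure induced by composition (\cite[Lemma 3.3]{B03}), we get $\rho(u)\rho(u')=\rho(P(u,u'))=0$. I would present this as a second, more conceptual route.

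The main obstacle is the justification that $\alpha_2$ restricts to zero on $B\oplus Z(1)$. This has to rest on the fact that $B\subseteq\cE^n$ contributes nothing to the $\cE^{*}$-component of $E_{-1}$, which is exactly what \eqref{eq:action2} asserts. I will rely on \eqref{eq:action2} as stated rather than re-derive it. The rest is formal weight bookkeeping.
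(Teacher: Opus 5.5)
Your argument is correct and is essentially the paper's proof. The paper writes $\rho(u)$ in block form on $\widetilde M=M_0\oplus M_{-1}\oplus M_{-2}$, with only the entries $\alpha_1(b)$ and $\gamma(z)$ in the first column because $\alpha_2=0$; hence $\rho(u')$ lands in $M_{-1}\oplus M_{-2}$ while $\rho(u)$ vanishes there, which are exactly your two steps. Your supplementary route via the vanishing of $P$ on $B\oplus Z(1)$ (Lemma~\ref{RingLieStructure}(2)) is also sound, since $\rho$ is the tautological action of $E\subseteq\underline{\End}(\widetilde M)$ and therefore respects composition, not only the bracket.
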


\begin{proof}
	The action $ \rho=(\alpha_1,\alpha_2,\gamma):
	(B,Z(1),[\,,\,]) \otimes\widetilde M \rightarrow\widetilde M $
	is completely described by the two projection morphisms
	$\alpha_1$ and $\gamma$ of the Tannakian category $\langle M\rangle^\otimes,$
	while no intermediate morphism from the weight \(-1\) piece to the weight
	\(-2\) piece occurs, since $\alpha_2$ is identically zero. Writing $
	\widetilde M
	=
	M_0\oplus M_{-1}\oplus M_{-2},$
	with $
	M_0=\operatorname{Gr}^W_0(\widetilde M),
	M_{-1}=\operatorname{Gr}^W_{-1}(\widetilde M),
	M_{-2}=\operatorname{Gr}^W_{-2}(\widetilde M),$
	the action of an element $
	u=(b,z)\in B \oplus Z(1)$
	has block form
	\[
	\rho(u)=
	\begin{pmatrix}
		0 & 0 & 0\\
		\alpha_1(b) & 0 & 0\\
		\gamma(z) & 0 & 0
	\end{pmatrix}.
	\]
	Hence the action is concentrated on the weight $0$ component.
	In particular, for any
	$	u=(b,z),	u'=(b',z')	\in  B \oplus Z(1),$ $\rho(u')$ maps $M_0$ into $
	M_{-1}\oplus M_{-2},$
	whereas $\rho(u)$ has no non-zero component starting from either $M_{-1}$ or $M_{-2}$. Therefore
	$	\rho(u)\rho(u')=0 $
	for every $u,u'\in B\oplus Z(1)$.
\end{proof}

According to \cite[Theorem 3.8, Theorem 0.1]{B03}, 

\begin{proposition} \label{theorem de B03}
	\begin{enumerate}
		\item The associated graded functor
		$\operatorname{Gr}^{\W}_{*}:
		\langle M\rangle^{\otimes}
		\rightarrow
		\langle \widetilde M\rangle^{\otimes}$
		induces an equivalence between the Tannakian category
		$\langle M\rangle^{\otimes}$ and the category of objects of
		$\langle\widetilde M\rangle^{\otimes}$ endowed with the action of the
		Lie algebra $(B,Z(1),[\,,\,])$ defined by~\eqref{eq:action2}.
		\item The Lie algebra of the unipotent radical $\UR(M)$ of $M$ is
		the abelian Lie algebra
		\[
		\bigl(B \oplus Z(1),[\ ,\ ]=0\bigr),
		\]
		whose underlying object is a split semi-abelian variety.
	\end{enumerate}
\end{proposition}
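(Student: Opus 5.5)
The plan is to specialize the general results of \cite{B03} to the present $1$-motive. Part (1) will follow from \cite[Theorem~3.8]{B03}. Part (2) will follow from \cite[Theorem~0.1]{B03} combined with Lemma~\ref{RingLieStructure}. The only work is to check that, for $M=[u:\ZZ\to\GG_m^s\times\cE^n]$, the data $(B,Z(1))$ computed in \cite{B03} are the ones introduced above. In particular, we must check that the pure extension part contributes nothing, because no non-trivial extension of $\cE^n$ by $\GG_m^s$ occurs.

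\textbf{Step 1: the Lie algebra $\Lie\UR(M)$.} Since $M$ has trivial character group and split semi-abelian part $\GG_m^s\times\cE^n$, \cite[Theorem~0.1]{B03} describes $\Lie\UR(M)$ as an extension of $B$ by a torus. This torus is generated by two things: the image of the Weil-pairing bracket $[B,B]\subseteq E_{-2}$, and the smallest subtorus $Z(1)$ containing the toric component of $u(1)$, taken modulo the part coming from the extension class. Here the extension of $\cE^n$ by $\GG_m^s$ is split, so the second contribution is exactly the smallest subtorus containing $(\rme^{t_1},\dots,\rme^{t_s})$. For the first contribution, Lemma~\ref{RingLieStructure}(2) gives that $P$ restricted to $B\otimes B$ is zero, because the relevant pairing $\alpha_2$ involves the Cartier dual component $\cE^{*s}$, and $M$ has no such component. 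Hence $[B,B]=0$. So the underlying object is $B\oplus Z(1)$ with zero bracket, and it is split because no non-zero morphism links the weight $-1$ and weight $-2$ pieces.

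\textbf{Step 2: the Tannakian equivalence.} By \cite[Theorem~3.8]{B03}, $\operatorname{Gr}^{\W}_*$ identifies $\langle M\rangle^\otimes$ with the objects of $\langle\widetilde M\rangle^\otimes$ carrying an action of $\Lie\UR(M)$, compatible with the action of $E$ on $\widetilde M$. Substituting Step 1 and restricting $\rho$ of \eqref{eq:action} to $B\oplus Z(1)$ gives exactly \eqref{eq:action2}. The action is well defined thanks to Lemma~\ref{uu'=0}: since $\rho(u)\rho(u')=0$, the action is compatible with the trivial bracket. Lemma~\ref{RingLieStructure}(1) guarantees that this is the minimal Lie subalgebra recovering $M$ from $\widetilde M$.

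\textbf{Main obstacle.} The delicate point is the identification of the toric part of $\Lie\UR(M)$ with the full $Z(1)$ (of dimension $s$) rather than a quotient. To settle it I would first invoke Lindemann's theorem, as recalled in the Notation section, to get multiplicative independence of the $\rme^{t_j}$. I would then argue that, since $\alpha_2=0$, no relation between the elliptic and toric coordinates can lower the dimension. Thus $\dim\UR(M)=n+s$.
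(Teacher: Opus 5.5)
Your overall route is the paper's. Part (1) is quoted from \cite[Theorem~3.8]{B03}, and part (2) is read off from \cite[Theorem~0.1]{B03} once the bracket on $B$ is known to vanish. That vanishing holds because $B$ lies in the $\cE^n$-factor of $E_{-1}$ and has no component in the Cartier-dual factor, since $\GG_m^s\times\cE^n$ is split. With $[B,B]=0$ there is nothing to quotient the toric part by, so the torus produced by \cite{B03} is exactly the $Z(1)$ of the statement.

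The step that does not work as written is the splitting. You say $\Lie\UR(M)$ is split ``because no non-zero morphism links the weight $-1$ and weight $-2$ pieces''. Vanishing of $\Hom$ between pure pieces of different weights says nothing about $\operatorname{Ext}^1$. Extensions of an abelian variety $B$ by a torus $Z(1)$ are parametrized by points of a power of the dual abelian variety $B^*$, and most of them are non-split. In \cite{B03} the motive $\Lie\UR(M)$ is an extension of $B$ by $Z(1)$ that need not split in general, even though $E$ itself is graded.

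What forces splitting here is the specific form of that extension. By \cite[Theorem~0.1]{B03} and \cite[Lemma~3.1]{BP}, $\Lie\UR(M)$ is the semi-abelian variety defined by the adjoint action of $(B,Z(1),[\,,\,])$ on itself. Its factors of automorphy therefore take values in $[B,B]$. The vanishing $[B,B]=0$, which you already proved, is exactly what kills the extension class. Route the splitting through that fact rather than through a $\Hom$-vanishing claim.

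Finally, what you call the main obstacle is not part of the proposition. $Z(1)$ is defined as the smallest subtorus containing (a multiple of) $(\rme^{t_1},\dots,\rme^{t_s})$, and the statement identifies $\Lie\UR(M)$ with $B\oplus Z(1)$ whatever $\dim Z(1)$ is. The equality $\dim Z(1)=s$ is a separate remark that follows directly from $2\pi\rmi\notin\QQ t_1+\cdots+\QQ t_s$; the appeal to $\alpha_2=0$ is not needed. Also, $n+s$ is the dimension of the semi-abelian variety $B\oplus Z(1)$, not of the group $\UR(M)$, whose Lie algebra has realizations of dimension $2n+s$.
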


\begin{proof} (1) See \cite[Theorem 3.8]{B03}.
	
	(2) According to \cite[Theorem 0.1]{B03} and \cite[Lemma 3.1]{BP},
	$\Lie\UR(M)$ is an extension of $B$ by $Z(1)$ whose factors of
	automorphy take values in $[B,B]$. By
	Lemma~\ref{RingLieStructure} (3), the Lie bracket on $B$ is trivial,
	so that $[B,B]=0$. Hence this extension is trivial, and therefore
	\[
	\Lie\UR(M)\cong B\oplus Z(1).
	\]
	Finally, Lemma~\ref{RingLieStructure} (3) also shows that the induced
	Lie bracket on $B\oplus Z(1)$ is trivial. 
\end{proof}

\section{The symmetric algebras and vector bundle associated with an object}

We recall the construction of \cite[\S\S 7.9--7.10]{D90}.

Let $\mathcal T$ be a Tannakian category and let
$S=\operatorname{Sp}(A)$ be an affine scheme in $\mathcal T$.
We denote by $\mathbf 1_A$ the $A$-module $A$.
Let $M$ be an $A$-module. The $n$-th symmetric power
$\operatorname{Sym}_A^n(M)$ of $M$ is the largest quotient of
$\otimes_A^n M$ on which the symmetric group $\mathfrak S_n$ acts
trivially. Deligne defines the symmetric algebra of $M$ by
\[
\operatorname{Sym}_A(M)
:=
\bigoplus_{n\geq0}\operatorname{Sym}_A^n(M),
\]
and associates to $M$ the affine $S$-scheme
\[
V(M):=
\operatorname{Sp}\bigl(\operatorname{Sym}_A(M)\bigr).
\]
For every commutative $A$-algebra $B$, the universal property of the
symmetric algebra gives a natural bijection
\[
\operatorname{Hom}_{A\text{-alg}}
\bigl(\operatorname{Sym}_A(M),B\bigr)
\simeq
\operatorname{Hom}_A(M,B).
\]
Let $T=\operatorname{Sp}(B)\to S$ and let
$
M_T:=M\otimes_A B $
be the $B$-module obtained from $M$ by base change. Then $V(M)$
represents the functor
$
T\mapsto
\operatorname{Hom}_T(M_T,\mathbf 1_T).
$
Thus $V(M)$ is the affine group $S$-scheme representing linear forms
on $M$.

Let $M^\vee$ be the dual of $M$. For every $T\to S$, there is a
natural identification
\[
\Gamma(M_T)
:=
\operatorname{Hom}_T(\mathbf 1_T,M_T)
\simeq
\operatorname{Hom}_T(M_T^\vee,\mathbf 1_T).
\]
Applying the preceding construction to $M^\vee$, the functor of
global sections
$
T\mapsto
\Gamma(M_T)
=
\operatorname{Hom}_T(\mathbf 1_T,M_T)
$
is represented by
\[
V(M^\vee)
:=
\operatorname{Sp}
\bigl(\operatorname{Sym}_A(M^\vee)\bigr).
\]
Deligne calls $V(M^\vee)$ the vector bundle associated with $M$.
Thus $V(M^\vee)$ is the affine group $S$-scheme representing the
global sections of $M$.

In particular, global sections of $M$ are represented by points of
$V(M^\vee)$, while $\operatorname{Sym}_A(M^\vee)$ is the algebra of
polynomial functions on $V(M^\vee)$.

\section{The symmetric algebras of the Lie algebra $\Lie \UR(M)$}\label{lem:symmetric-differential-operators}

From now on, we denote by $\mathbf 1$ the 1-motive $\ZZ(0)=[\ZZ \to 0]$ and by $L$ the Lie algebra $\Lie \UR(M)=(B,Z(1), [\, , \,]). $
The two algebras $\operatorname{Sym}_{\mathbf 1}(L) $ and $\operatorname{Sym}_{\mathbf 1}(L^\vee)$ have conceptually different roles:
\begin{itemize}
	\item $\operatorname{Sym}_{\mathbf 1}(L)$ is the algebra of operators on $L$,
	\item $\operatorname{Sym}_{\mathbf 1}(L^\vee)$ is the algebra of polynomial functions on $L$.
\end{itemize}

We begin by identifying the symmetric algebra
$\operatorname{Sym}_{\mathbf 1}(L)$ with an algebra of differential
operators on $\operatorname{Sym}_{\mathbf 1}(L^\vee)$. Because of the splitting $ L=B\oplus Z(1),$
one has $
\operatorname{Sym}_{\mathbf 1}(L)
\simeq
\operatorname{Sym}_{\mathbf 1}(B)\otimes
\operatorname{Sym}_{\mathbf 1}(Z(1)) .$
Hence the symmetric algebra $\operatorname{Sym}_{\mathbf 1}(L)$ has the canonical bigrading
\[ \operatorname{Sym}_{\mathbf 1}(L)= \bigoplus_{a,b\geq0}
\operatorname{Sym}^a_{\mathbf 1}(B)\otimes
\operatorname{Sym}^b_{\mathbf 1}(Z(1)).
\]
Dually the symmetric algebra $\operatorname{Sym}_{\mathbf 1}(L^\vee)$ has the canonical bigrading
\[
\operatorname{Sym}_{\mathbf 1}(L^\vee)=
\bigoplus_{r,s\geq0}
\operatorname{Sym}^r_{\mathbf 1}(B^\vee)
\otimes
\operatorname{Sym}^s_{\mathbf 1}(Z(1)^\vee).
\]

\begin{lemma}\label{derivation}
	There is a canonical action
	\[
	\phi: \operatorname{Sym}_{\mathbf 1}(L)\otimes \operatorname{Sym}_{\mathbf 1}(L^\vee) \longrightarrow \operatorname{Sym}_{\mathbf 1}(L^\vee)
	\]
	such that every element $u\in L$ acts on $\operatorname{Sym}_{\mathbf 1}(L^\vee)$ as a
	constant-coefficient derivation
\[
\begin{aligned}
	D_u:
	\operatorname{Sym}_{\mathbf 1}^r(L^\vee)
	&\longrightarrow
	\operatorname{Sym}_{\mathbf 1}^{r-1}(L^\vee),\\
	\ell_1\cdots\ell_r
	&\longmapsto
	D_u(\ell_1\cdots\ell_r)
	=
	\sum_{i=1}^r
	\operatorname{ev}_{L^\vee}(\ell_i,u)\,
	\ell_1\cdots\widehat{\ell_i}\cdots\ell_r, 
\end{aligned}
\]
	where $
	ev_{L^\vee}:
	L^\vee\otimes L
	\rightarrow \mathbf 1 $
	is the evaluation morphism in the Tannakian category $\langle M \rangle^\otimes $ (here $
	\ell_1\cdots\ell_r$
	denotes the image of $
	\ell_1\otimes\cdots\otimes\ell_r$
	in $\operatorname{Sym}_{\mathbf 1}^r(L^\vee)$).
	
	If we set $
	A_{r,s}
	:=
	\operatorname{Sym}^r_{\mathbf 1}(B^\vee)
	\otimes
	\operatorname{Sym}^s_{\mathbf 1}(Z(1)^\vee),$ for any $b\in B$ and $z\in Z(1)$, one has
	\[
	D_b:A_{r,s}\longrightarrow A_{r-1,s},
	\qquad
	D_z:A_{r,s}\longrightarrow A_{r,s-1},
	\]
	and therefore $D_bD_z:	A_{r,s} \rightarrow A_{r-1,s-1}.$
	In particular the component $	
	B\otimes Z(1)
	\subset
	\operatorname{Sym}^2_{\mathbf 1}(L) $
	acts on $\operatorname{Sym}_{\mathbf 1}(L^\vee)$ by mixed second-order differential operators.
\end{lemma}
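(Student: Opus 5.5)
The plan is to build $\phi$ from the evaluation morphism alone and then read off the bigraded behaviour from the splitting $L=B\oplus Z(1)$ of Proposition~\ref{theorem de B03}(2).

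\emph{Degree-one operators.} For $u\in L$ and $r\geq1$, consider the morphism $L\otimes(L^\vee)^{\otimes r}\to(L^\vee)^{\otimes (r-1)}$ given by
\[
\sum_{i=1}^r \operatorname{ev}_{L^\vee}\circ(\text{swap }u\text{ to the }i\text{-th slot}),
\]
i.e.\ contract $u$ against the $i$-th factor and keep the others in order. This is a morphism in $\langle M\rangle^\otimes$, since it is assembled from $\operatorname{ev}$, the commutativity constraint and identities. It is $\mathfrak S_r$-equivariant: $\mathfrak S_r$ acts on the target through the stabilizers, and permuting the sum reindexes its terms. Hence it descends to the largest $\mathfrak S_r$-invariant quotients, giving $D:L\otimes\operatorname{Sym}^r_{\mathbf 1}(L^\vee)\to\operatorname{Sym}^{r-1}_{\mathbf 1}(L^\vee)$, with $D=0$ on $\operatorname{Sym}^0$. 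This is exactly the displayed formula. The Leibniz rule $D_u(fg)=D_u(f)g+fD_u(g)$ follows by splitting the sum over the positions belonging to $f$ and to $g$. So $D_u$ is a derivation, and it is constant-coefficient because it lowers degree by exactly one.

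\emph{Extension to $\operatorname{Sym}_{\mathbf 1}(L)$.} Two such derivations commute: $D_uD_{u'}$ contracts $u,u'$ against two distinct slots, and the result is symmetric in $(u,u')$ after passing to $\operatorname{Sym}$. Since $L$ is abelian (Lemma~\ref{RingLieStructure}(3)), this gives an algebra morphism from the tensor algebra on $L$ into $\End(\operatorname{Sym}_{\mathbf 1}(L^\vee))$ that factors through $\operatorname{Sym}_{\mathbf 1}(L)$. One can build this internally by defining $L^{\otimes a}\otimes\operatorname{Sym}^r\to\operatorname{Sym}^{r-a}$ as iterated contraction, checking $\mathfrak S_a$-invariance, and summing over $a$. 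Adjunction then yields $\phi$. The main obstacle is making this internal ``action'' rigorous in the Tannakian setting, since we cannot use elements naively. Every identity therefore has to be phrased as an equality of morphisms built from $\operatorname{ev}$, $\delta$ and symmetry constraints, and one needs to check compatibility with the quotient maps $\otimes^n\to\operatorname{Sym}^n$. These quotients exist because $\langle M\rangle^\otimes$ is $\QQ$-linear after tensoring, or one works with the $\operatorname{Sym}$ defined via the image of the symmetrizer.

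\emph{Bigrading.} Use $L^\vee=B^\vee\oplus Z(1)^\vee$. The evaluation pairing is block-diagonal, so $\operatorname{ev}(B^\vee\otimes Z(1))=0$ and $\operatorname{ev}(Z(1)^\vee\otimes B)=0$. Therefore $D_b$ kills every $Z(1)^\vee$ factor and contracts only $B^\vee$ factors, which gives $D_b:A_{r,s}\to A_{r-1,s}$; symmetrically $D_z:A_{r,s}\to A_{r,s-1}$. Composing, $D_bD_z$ maps $A_{r,s}$ to $A_{r-1,s-1}$. The summand $B\otimes Z(1)\subset\operatorname{Sym}^2_{\mathbf 1}(L)$, coming from the bigrading of $\operatorname{Sym}_{\mathbf 1}(L)$, acts through $\phi$ by these composites, which are the mixed second-order operators in the statement.
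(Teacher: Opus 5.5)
Your proposal is correct and follows essentially the same route as the paper. You contract against $u$ via $\operatorname{ev}_{L^\vee}$, extend by the Leibniz rule to a degree $-1$ derivation, use $D_uD_{u'}=D_{u'}D_u$ to extend to $\operatorname{Sym}_{\mathbf 1}(L)$, and read off the bigrading from the block-diagonality of the evaluation pairing on $L^\vee=B^\vee\oplus Z(1)^\vee$.

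The one difference is that you construct $D_u$ explicitly on tensor powers and descend through the $\mathfrak S_r$-coinvariants, assembling $\phi$ from pieces $\operatorname{Sym}^a_{\mathbf 1}(L)\otimes\operatorname{Sym}^r_{\mathbf 1}(L^\vee)\to\operatorname{Sym}^{r-a}_{\mathbf 1}(L^\vee)$. The paper instead simply invokes the universal property of the symmetric algebra. Also, it is the commutation of the $D_u$, not the abelianness of $L$, that makes the extension factor through $\operatorname{Sym}_{\mathbf 1}(L)$.
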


\begin{proof}
	For every $u\in L$, the evaluation morphism  $
	ev_{L^\vee}:
	L^\vee\otimes L
	\rightarrow \mathbf 1 $
	defines a contraction $
	D_u: L^\vee\rightarrow \mathbf 1,
	\ell\mapsto \operatorname{ev}_{L^\vee}(\ell,u).$
	By the universal property of the symmetric algebra, the linear map
	$ L^\vee \rightarrow \mathbf 1 =\operatorname{Sym}_{\mathbf 1}^0(L^\vee)
	\hookrightarrow \operatorname{Sym}_{\mathbf 1}(L^\vee),
	\ell\mapsto \operatorname{ev}_{L^\vee}(\ell,u),$
	extends uniquely to a derivation
	\[
	D_u:\operatorname{Sym}_{\mathbf 1}(L^\vee)
	\longrightarrow
	\operatorname{Sym}_{\mathbf 1}(L^\vee).
	\]
	Since $D_u$ maps the generators
	$\operatorname{Sym}^1_{\mathbf 1}(L^\vee)=L^\vee$ to
	$\operatorname{Sym}^0_{\mathbf 1}(L^\vee)=\mathbf 1$, the derivation $D_u$
	has degree $-1$. Hence
	$
	D_u(FG)=D_u(F)G+F D_u(G)$
	for all $
	F,G\in\operatorname{Sym}_{\mathbf 1}(L^\vee).$ Applying the Leibniz rule to a decomposable element
$	\ell_1\cdots\ell_r
	\in
	\operatorname{Sym}^r_{\mathbf 1}(L^\vee)$
	gives
	\[
	D_u(\ell_1\cdots\ell_r)
	=
	\sum_{i=1}^r
\operatorname{ev}_{L^\vee}(\ell_i,u)\,
	\ell_1\cdots\widehat{\ell_i}\cdots\ell_r.
	\]
	In particular, $D_u\bigl(\operatorname{Sym}^r_{\mathbf 1}(L^\vee)\bigr)
	\subseteq
	\operatorname{Sym}^{r-1}_{\mathbf 1}(L^\vee).$
	Moreover, for $u,u'\in L$, the derivations $D_u$ and
	$D_{u'}$ commute: $ D_uD_{u'}=D_{u'}D_u.$
	Indeed, on a decomposable element
	$\ell_1\cdots\ell_r$, both compositions are equal to
	\[
	\sum_{i\neq j}
\operatorname{ev}_{L^\vee}(\ell_i,u)
	\operatorname{ev}_{L^\vee}(\ell_j,u')
	\ell_1\cdots
	\widehat{\ell_i}\cdots
	\widehat{\ell_j}\cdots
	\ell_r.
	\]
	Consequently, the morphism
	$L \rightarrow
	\operatorname{End}\bigl(
	\operatorname{Sym}_{\mathbf 1}(L^\vee)
	\bigr),
	u\mapsto D_u,$
	has commuting image. By the universal property of the symmetric
	algebra, it therefore extends uniquely to an algebra morphism
	$
	\operatorname{Sym}_{\mathbf 1}(L)
	\rightarrow
	\operatorname{End}\bigl(
	\operatorname{Sym}_{\mathbf 1}(L^\vee)
	\bigr).$
	
	Since $ L^\vee = B^\vee \oplus Z(1)^\vee,$
	the evaluation morphism decomposes accordingly as
	$	\operatorname{ev}_{L^\vee}
	=
	\operatorname{ev}_{B^\vee}
	\oplus
	\operatorname{ev}_{Z(1)^\vee},$
	with vanishing cross-pairings
	$
	B^\vee\otimes Z(1) \rightarrow \mathbf 1,
	Z(1)^\vee\otimes B \rightarrow \mathbf 1.$
	Therefore, if $b\in B$, its pairing with $Z(1)^\vee$ is zero.
	Hence $D_b$ acts only on the $B^\vee$-factor, and consequently
	$ D_b(A_{r,s})\subseteq A_{r-1,s}.$
	Similarly, if $z\in Z(1)$, its pairing with $B^\vee$ is zero.
	Thus $D_z$ acts only on the $Z(1)^\vee$-factor, and
	$	D_z(A_{r,s})\subseteq A_{r,s-1}.$
	It follows that
	\[
	D_bD_z(A_{r,s})\subseteq A_{r-1,s-1}.
	\]
	Finally, under the canonical decomposition
	$
	\operatorname{Sym}^2_{\mathbf 1}(B\oplus Z(1))
	\simeq
	\operatorname{Sym}^2_{\mathbf 1}(B)
	\oplus
	\bigl(B\otimes Z(1)\bigr)
	\oplus
	\operatorname{Sym}^2_{\mathbf 1}(Z(1)),
	$
	the element $b\otimes z$ acts precisely as
	$ D_bD_z.$
	Hence the mixed component $B\otimes Z(1)$ acts on
	$\operatorname{Sym}_{\mathbf 1}(L^\vee)$ by mixed second-order
	differential operators.
\end{proof}

\begin{remark}
	The action described in Lemma~\ref{derivation}
	is the intrinsic analogue of the usual action of constant-coefficient
	differential operators on a polynomial algebra.
		Indeed, consider first the case of a two-dimensional $\QQ$-vector space
	$V=	\mathbb Q e_X\oplus \mathbb Q e_Y,$
	and let
	$	X,Y\in  V^\vee$
	be the dual basis, so that
	$
	\langle X,e_X\rangle=1,
	\langle Y,e_Y\rangle=1,
	\langle X,e_Y\rangle
	=
	\langle Y,e_X\rangle
	=
	0.$
	Then $
	\operatorname{Sym}(V^\vee)
	\simeq
	\mathbb Q[X,Y].$
	By the definition of the contraction operators,
	$	D_{e_X}(X)=1,
	D_{e_X}(Y)=0,$
	and $
	D_{e_Y}(X)=0,
	D_{e_Y}(Y)=1. $
	Since both operators satisfy the Leibniz rule, one obtains
	\[
	D_{e_X}
	=
	\frac{\partial}{\partial X},
	\qquad
	D_{e_Y}
	=
	\frac{\partial}{\partial Y}.
	\]
	Consequently, the element
	$ e_Xe_Y
	\in
	\operatorname{Sym}^2(V)$
	acts on $\mathbb Q[X,Y]$ as the mixed second-order differential
	operator
	\[
	D_{e_X}D_{e_Y}
	=
	\frac{\partial^2}{\partial X\,\partial Y}.
	\]
\end{remark}

For every $r\geq 1$, the evaluation morphism
$\operatorname{ev}_{L^\vee}:L^\vee\otimes L\rightarrow\mathbf 1$
induces an evaluation pairing
\[
\operatorname{ev}_{\operatorname{Sym}^r_{\mathbf 1}(L^\vee)}:
\operatorname{Sym}^r_{\mathbf 1}(L^\vee)
\otimes
\operatorname{Sym}^r_{\mathbf 1}(L)
\longrightarrow
\mathbf 1.
\]
We use the convention that, on decomposable elements, this pairing is
given by
\begin{equation}\label{conventionEV}
\operatorname{ev}_{\operatorname{Sym}^r_{\mathbf 1}(L^\vee)}
\bigl(
\ell_1\cdots\ell_r,
a_1\cdots a_r
\bigr)
:=
\sum_{\sigma\in\mathfrak S_r}
\prod_{i=1}^r
\operatorname{ev}_{L^\vee}
\bigl(\ell_i,a_{\sigma(i)}\bigr),
\end{equation}
where $\mathfrak S_r$ denotes the symmetric group on $r$ letters,
$\ell_1,\dots,\ell_r\in L^\vee$, and
$a_1,\dots,a_r\in L$.

We now extend the action of $L$ on $\widetilde M$ to its symmetric
algebra and consider the quotient by the kernel of this action.

\begin{lemma}\label{action}
	The Lie algebra representation
	$\rho:L\longrightarrow\operatorname{End}(\widetilde M)$
	defined in \eqref{eq:action} extends uniquely to an algebra morphism
	\[
	\widetilde\rho:
	\operatorname{Sym}_{\mathbf 1}(L)
	\longrightarrow
	\operatorname{End}(\widetilde M).
	\]
	Moreover,
	$\operatorname{Sym}_{\mathbf 1}^{\geq2}(L)
	\subseteq\ker(\widetilde\rho)$.
	In particular,
	$B\otimes Z(1)\subseteq\ker(\widetilde\rho)$.
	
	Let
	\[
	Q_\rho:=
	\operatorname{Sym}_{\mathbf 1}(L)
	\big/
	\ker(\widetilde\rho).
	\]
	Then $\widetilde\rho$ induces an injective algebra morphism
	$Q_\rho\hookrightarrow\operatorname{End}(\widetilde M)$, and hence
	$Q_\rho\simeq\operatorname{Im}(\widetilde\rho)$.
	In particular, for every $b\in B$ and $z\in Z(1)$, the class
	$\overline{bz}$ of $bz$ in $Q_\rho$ is zero.
\end{lemma}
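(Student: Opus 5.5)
The plan is to use the universal property of the symmetric algebra, exactly as in the proof of Lemma~\ref{derivation}, and then Lemma~\ref{uu'=0} for the vanishing in degrees $\geq 2$.

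\emph{Existence and uniqueness of $\widetilde\rho$.} A linear map from $L$ into a (unital, associative) algebra extends to an algebra morphism out of $\operatorname{Sym}_{\mathbf 1}(L)$ as soon as its image consists of pairwise commuting elements. This is the same argument used to extend $u\mapsto D_u$ in Lemma~\ref{derivation}. Here the target algebra is $\operatorname{End}(\widetilde M)$, with multiplication given by composition. By Lemma~\ref{uu'=0}, for all $u,u'\in L=B\oplus Z(1)$ we have
\[
\rho(u)\rho(u')=0=\rho(u')\rho(u),
\]
so the operators $\rho(u)$ commute pairwise. Hence $\rho$ extends to an algebra morphism $\widetilde\rho$, with $\widetilde\rho(a_1\cdots a_r)=\rho(a_1)\circ\cdots\circ\rho(a_r)$ and $\widetilde\rho(1)=\mathrm{id}_{\widetilde M}$. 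This extension is unique because $\operatorname{Sym}_{\mathbf 1}(L)$ is generated by $\operatorname{Sym}^1_{\mathbf 1}(L)=L$. Compatibility with the Lie structure is automatic: the bracket on $L$ is zero by Proposition~\ref{theorem de B03}(2), and the commutator of the images vanishes as well.

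\emph{Vanishing on $\operatorname{Sym}^{\geq 2}_{\mathbf 1}(L)$.} For $r\geq 2$, the image of a decomposable element $a_1\cdots a_r$ is $\rho(a_1)\circ\cdots\circ\rho(a_r)$. This composite contains the factor $\rho(a_1)\rho(a_2)$, which is zero by Lemma~\ref{uu'=0}. Such decomposable elements generate $\operatorname{Sym}^r_{\mathbf 1}(L)$, since it is a quotient of $L^{\otimes r}$. Therefore $\operatorname{Sym}^{\geq 2}_{\mathbf 1}(L)\subseteq\ker(\widetilde\rho)$. Specialising to $b\in B$ and $z\in Z(1)$, the element $bz$ lies in the summand $B\otimes Z(1)$ of $\operatorname{Sym}^2_{\mathbf 1}(L)$ from the decomposition recalled in the proof of Lemma~\ref{derivation}. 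Hence $B\otimes Z(1)\subseteq\ker(\widetilde\rho)$, and $\overline{bz}=0$ in $Q_\rho$.

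\emph{The quotient $Q_\rho$.} The kernel of an algebra morphism is a two-sided ideal. This holds in the Tannakian setting too, since kernels exist in the abelian category $\langle M\rangle^\otimes$ and the ideal property follows from the multiplicativity of $\widetilde\rho$. So $Q_\rho$ is an algebra, and $\widetilde\rho$ factors through it. The first isomorphism theorem, valid in the abelian category, then gives an injection $Q_\rho\hookrightarrow\operatorname{End}(\widetilde M)$ with image $\operatorname{Im}(\widetilde\rho)$.

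\emph{Main obstacle.} The only delicate point is to make the manipulations with ``elements'' rigorous in the Tannakian category $\langle M\rangle^\otimes$. These include commutation, composition and the generation of $\operatorname{Sym}^r_{\mathbf 1}(L)$ by decomposables. I would handle this by working throughout with morphisms such as $L\otimes L\to\underline{\End}(\widetilde M)$. Lemma~\ref{uu'=0} is then read as the statement that the composite $L\otimes L\to\underline{\End}(\widetilde M)\otimes\underline{\End}(\widetilde M)\to\underline{\End}(\widetilde M)$ is zero. The universal property of $\operatorname{Sym}$ as the largest $\mathfrak S_r$-invariant quotient then takes care of the rest.
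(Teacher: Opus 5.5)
Your proposal is correct and follows the paper's route: you extend $\rho$ by a universal property, kill $\operatorname{Sym}^{\geq 2}_{\mathbf 1}(L)$ using Lemma~\ref{uu'=0}, and identify $Q_\rho$ with $\operatorname{Im}(\widetilde\rho)$. The only cosmetic difference is how $\rho$ is extended: you use the universal property of $\operatorname{Sym}_{\mathbf 1}(L)$ for maps with pairwise commuting image (the commutation coming from Lemma~\ref{uu'=0}), whereas the paper uses $U(L)\simeq\operatorname{Sym}_{\mathbf 1}(L)$ for the abelian Lie algebra $L$ together with the universal property of the enveloping algebra.
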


\begin{proof}
Since $L$ is an abelian Lie algebra object in
$\langle M\rangle^\otimes$, its universal enveloping algebra is
canonically isomorphic to its symmetric algebra,
	$U(L)\simeq\operatorname{Sym}_{\mathbf 1}(L)$.
	Hence, by the universal property of the enveloping algebra, the
	representation $\rho$ extends uniquely to an algebra morphism
	\[
	\widetilde\rho:
	\operatorname{Sym}_{\mathbf 1}(L)
	\longrightarrow
	\operatorname{End}(\widetilde M).
	\]
	For $v_1,\dots,v_d\in L$, the extension is given by
	\[
	\widetilde\rho(v_1\cdots v_d)
	=
	\rho(v_1)\cdots\rho(v_d).
	\]
	By Lemma~\ref{uu'=0}, one has
	$\rho(v)\rho(v')=0$ for every $v,v'\in L$.
	It follows that every element of symmetric degree at least two acts
	trivially on $\widetilde M$, and therefore
	\[
	\operatorname{Sym}_{\mathbf 1}^{\geqslant 2}(L)
	:=
	\bigoplus_{d\geq2}
	\operatorname{Sym}_{\mathbf 1}^{d}(L)
	\subseteq
	\ker(\widetilde\rho).
	\]
		Since $
	\operatorname{Sym}_{\mathbf 1}^{2}(L)
	\simeq
	\operatorname{Sym}_{\mathbf 1}^{2}(B)
	\oplus
	\bigl(B\otimes Z(1)\bigr)
	\oplus
	\operatorname{Sym}_{\mathbf 1}^{2}(Z(1)),
	$
	we obtain in particular
	$
	B\otimes Z(1)\subseteq\ker(\widetilde\rho).$
	
	Finally, the morphism induced by $\widetilde\rho$ identifies $Q_\rho =
	\operatorname{Sym}_{\mathbf 1}(L) /
	\ker(\widetilde\rho)$
	with $\operatorname{Im}(\widetilde\rho)$.
	Therefore, for every $b\in B$ and $z\in Z(1)$,
	$\overline{bz}=0 $ in $Q_\rho.$
\end{proof}

\section{Proof the main theorem}

The point $u(1)= (\rme^{t_1},\dots,\rme^{t_s},P_1,\dots,P_n),$ defining the
$1$-motive $M$, is a global section of the 1-motive $M_T$ for some affine scheme $\mathrm{Sp}(C) \to \mathrm{Sp}({\mathbf 1})$ in the Tannakian category $\langle M \rangle^\otimes ,$ that is $u(1) \in \Gamma (M_T)=V(M^\vee)(\mathrm{Sp}(C)).$ Hence the point $u(1)$ is equivalently
described by a morphism of commutative algebras in $\langle M \rangle^\otimes $
\[
\operatorname{Ev}_{u(1)}:
\operatorname{Sym}_{\mathbf 1}(M^\vee)
\longrightarrow C.
\]
This is the intrinsic Tannakian analogue of evaluation of polynomial
functions at a point.
We can therefore define the ideal of polynomial functions vanishing at $u(1)$ by
\[	I_{u(1)}
	=
	\ker\Big(
	\operatorname{Sym}_{\mathbf 1}(M^\vee)
	\overset{\operatorname{Ev}_{u(1)}}{\longrightarrow}
	C
	\Big).
\]

Consider a point $u=(b,z)$ of $L=B\oplus Z(1)$, where
$b=(P'_1,\dots,P'_n)$ and
$z=(\rme^{t'_1},\dots,\rme^{t'_s}).$
Here $p'_1,\dots,p'_n$ are obtained from $p_1,\dots,p_n$ by an
invertible $k$-linear change of basis, and $t'_1,\dots,t'_s$ are
obtained from $t_1,\dots,t_s$ by an invertible $\QQ$-linear change
of basis. In particular, all the $p'_i$ and $t'_j$ are algebraic.
Their classes form, respectively, a $k$-basis of
$\langle p_1,\dots,p_n\rangle_k$ in
$\CC/(\Omega\otimes_{\ZZ}\QQ)$ and a $\QQ$-basis of
$\langle t_1,\dots,t_s\rangle_{\QQ}$ in
$\CC/2\pi\rmi\QQ$. The point $u=(b,z)$ is a global section of the 1-motive $L_T,$ that is $u \in \Gamma (L_T)=V(L^\vee)(\mathrm{Sp}(C)).$ Hence the point $u$ is equivalently
described by a morphism of commutative algebras in $\langle M \rangle^\otimes $
\[
\operatorname{Ev}_{u}:
\operatorname{Sym}_{\mathbf 1}(L^\vee)
\longrightarrow C.
\]
As before we define the ideal of polynomial functions vanishing at $u$ by
\[	I_{u}
=
\ker \Big(
\operatorname{Sym}_{\mathbf 1}(L^\vee)
\overset{\operatorname{Ev}_{u}}{\longrightarrow}
C
\Big).
\]

We have two actions of the same symmetric algebra
$\operatorname{Sym}_{\mathbf 1}(L)$. On the one hand, by
Lemma~\ref{action}, the action
$\widetilde\rho:
\operatorname{Sym}_{\mathbf 1}(L)
\rightarrow
\operatorname{End}(\widetilde M)$
satisfies
$B\otimes Z(1)\subseteq\ker(\widetilde\rho)$.
On the other hand, by Lemma~\ref{derivation}, the canonical
differential action
$\phi:
\operatorname{Sym}_{\mathbf 1}(L)
\rightarrow
\operatorname{End}\bigl(\operatorname{Sym}_{\mathbf 1}(L^\vee)\bigr)$
associates with each $v\in L$ the constant-coefficient derivation
$\phi(v)=D_v$.

Set
\[
K_\rho:=\ker(\widetilde\rho)
\]
and define its annihilator under the differential action $\phi$ by
\[
\operatorname{Ann}_{\phi}(K_\rho)
:=
\left\{
F\in\operatorname{Sym}_{\mathbf1}(L^\vee)
\;\middle|\;
\phi(P)F=0
\text{ for every }P\in K_\rho
\right\}.
\]
A polynomial function is said to be $\rho$-compatible if it belongs
to $\operatorname{Ann}_{\phi}(K_\rho)$.
We define the subobject of $\rho$-compatible polynomial relations
vanishing at $u$ by
\[
I_u^\rho
:=
I_u\cap\operatorname{Ann}_{\phi}(K_\rho).
\]
Under the equivalence of Proposition~\ref{theorem de B03} (1), the
$\rho$-compatible polynomial relations vanishing at $u$ correspond
to the polynomial relations vanishing at $u(1)$.

We now exploit the $\rho$-compatibility condition together with the
square-zero property of the action on $\widetilde M$ to control the
degree of polynomial relations vanishing at $u(1)$.

\begin{lemma}\label{linear-relations}
Let $F\in I_u^\rho$. Then $F$ has degree at most one. More precisely,
\[
F\in
\mathbf1\oplus L^\vee
=
\mathbf1\oplus B^\vee\oplus Z(1)^\vee.
\]
Consequently, $F$ can be written uniquely as
$
F=F_B+F_Z+c,
$
where
$
F_B\in B^\vee,
F_Z\in Z(1)^\vee,
c\in\mathbf1.
$
\end{lemma}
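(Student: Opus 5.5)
The proof will show that $\operatorname{Ann}_{\phi}(K_\rho)$ is already contained in $\mathbf1\oplus L^\vee$. The condition $F\in I_u$ is not needed for the degree bound; it only matters later, when the linear relation is separated into its two parts. The starting point is Lemma~\ref{action}, which gives
\[
\operatorname{Sym}^{\geq2}_{\mathbf1}(L)\subseteq K_\rho .
\]
In particular $\operatorname{Sym}^2_{\mathbf1}(L)\subseteq K_\rho$, so every $F\in I_u^\rho$ satisfies $\phi(aa')F=D_aD_{a'}F=0$ for all $a,a'\in L$. This includes the pure components $\operatorname{Sym}^2_{\mathbf1}(B)$ and $\operatorname{Sym}^2_{\mathbf1}(Z(1))$, and also the mixed component $B\otimes Z(1)$.

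Next I decompose $F=\sum_{r\geq0}F_r$ with $F_r\in\operatorname{Sym}^r_{\mathbf1}(L^\vee)$. By Lemma~\ref{derivation}, each $D_aD_{a'}$ is homogeneous of degree $-2$. Hence the condition $D_aD_{a'}F=0$ splits degree by degree: $D_aD_{a'}F_r=0$ for every $r$ and all $a,a'\in L$. The core claim is then the following.

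\emph{Claim.} If $G\in\operatorname{Sym}^r_{\mathbf1}(L^\vee)$ with $r\geq2$ satisfies $D_{a_1}D_{a_2}G=0$ for all $a_1,a_2\in L$, then $G=0$.

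To prove the claim I iterate. Since $D_{a_1}\cdots D_{a_r}=D_{a_3}\cdots D_{a_r}(D_{a_1}D_{a_2})$, all $r$-fold contractions of $G$ vanish. Using the formula of Lemma~\ref{derivation} repeatedly, the scalar $D_{a_1}\cdots D_{a_r}G$ equals $\operatorname{ev}_{\operatorname{Sym}^r_{\mathbf1}(L^\vee)}(G,a_1\cdots a_r)$ with the convention~\eqref{conventionEV}. So $G$ pairs to zero with all of $\operatorname{Sym}^r_{\mathbf1}(L)$.

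The main obstacle is the non-degeneracy of the pairing~\eqref{conventionEV}. Working in a Tannakian category over a field of characteristic zero, I would check it after applying a fibre functor, where it becomes the classical statement that $\operatorname{Sym}^r(V^\vee)\otimes\operatorname{Sym}^r(V)\to\QQ$, normalized with the factor $\sum_{\sigma}$, is perfect. In coordinates this is the remark following Lemma~\ref{derivation}: the operators $D_{e_i}$ are the partial derivatives $\partial/\partial X_i$, and a homogeneous polynomial of degree $r\geq2$ all of whose second partial derivatives vanish is zero. The passage through the fibre functor is harmless because $\operatorname{Sym}$ and $\operatorname{ev}$ commute with it, and a morphism in a Tannakian category is zero once its image under a fibre functor is zero. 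This gives $F_r=0$ for all $r\geq2$, so $F=F_0+F_1\in\mathbf1\oplus L^\vee$.

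Finally, the splitting $L^\vee=B^\vee\oplus Z(1)^\vee$ of Proposition~\ref{theorem de B03}(2), together with the bigrading $A_{r,s}$, writes $F_1$ uniquely as $F_B+F_Z$, with $F_B\in A_{1,0}=B^\vee$ and $F_Z\in A_{0,1}=Z(1)^\vee$. Setting $c=F_0$ gives $F=F_B+F_Z+c$, and uniqueness follows because the sum $\mathbf1\oplus B^\vee\oplus Z(1)^\vee$ is direct.
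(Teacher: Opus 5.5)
Your proposal is correct and follows essentially the paper's argument: you show that the components $F_r$ with $r\geq 2$ pair to zero with all of $\operatorname{Sym}^r_{\mathbf 1}(L)$ under the convention \eqref{conventionEV}, then invoke non-degeneracy of that pairing. The differences are minor. You use only $\operatorname{Sym}^2_{\mathbf 1}(L)\subseteq K_\rho$ and iterate, whereas the paper uses all of $\operatorname{Sym}^{\geq 2}_{\mathbf 1}(L)$ and reads off the degree-$0$ component. You also justify the non-degeneracy via a fibre functor in characteristic zero, which the paper simply asserts.
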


\begin{proof}
	Write
	\[
	F=\sum_{d\geq0}F_d,
	\qquad
	F_d\in\operatorname{Sym}_{\mathbf1}^d(L^\vee).
	\]
	By Lemma~\ref{action},
	$
	\operatorname{Sym}_{\mathbf1}^{\geq2}(L)
	\subseteq
	\ker(\widetilde\rho).$ 
	Since $F$ is $\rho$-compatible, every element of
	$\ker(\widetilde\rho)$ acts trivially on $F$ through the differential
	action $\phi$. Hence, for every $d\geq2$ and every
	$v_1,\dots,v_d\in L$,
	\[
	\phi(v_1\cdots v_d)F
	=
	D_{v_1}\cdots D_{v_d}F
	=
	0.
	\]
	The component of degree $0$ of this expression is precisely
	$
	D_{v_1}\cdots D_{v_d}F_d.$
	By the definition of the differential action given in Lemma \ref{derivation} and by our convention
	for the evaluation pairing on symmetric powers \eqref{conventionEV}, one has
	\[
	D_{v_1}\cdots D_{v_d}F_d
	=
	\operatorname{ev}_{\operatorname{Sym}_{\mathbf1}^d(L^\vee)}
	\bigl(F_d,v_1\cdots v_d\bigr).
	\]
	Therefore
	$
	\operatorname{ev}_{\operatorname{Sym}_{\mathbf1}^d(L^\vee)}
	\bigl(F_d,v_1\cdots v_d\bigr)=0
	$
	for every $v_1,\dots,v_d\in L$.
	Since decomposable elements generate
	$\operatorname{Sym}_{\mathbf1}^d(L)$, the non-degeneracy of the
	evaluation pairing implies
	$F_d=0$ for every $d\geq2.$
	Thus
	\[
	F=F_0+F_1\in\mathbf1\oplus L^\vee.
	\]
	Finally, since
	$
	L^\vee=B^\vee\oplus Z(1)^\vee,$
	we obtain
	$
	F=F_B+F_Z+c,$
	with
	$
	F_B\in B^\vee,
	$
	$
	F_Z\in Z(1)^\vee
	$
	and
	$
	c\in\mathbf1.
	$
\end{proof}

The preceding decomposition leaves three possibilities: a polynomial relation may
involve only the elliptic variables, only the toric variables, or both
families separately. The classical algebraic independence theorems
exclude the first two cases.

\begin{lemma}\label{separated-relations}
	Let
	\[
	F=F_B+F_Z+c
	\]
	be a polynomial relation of degree at most one vanishing at $u(1),$ where $F_B$ depends
	only on the elliptic variables, $F_Z$ depends only on the toric
	variables, and $c\in\oQQ$.
	If $F$ is non-trivial, then both $F_B$ and $F_Z$ are non-constant.
\end{lemma}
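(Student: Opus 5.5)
The plan is to argue by contraposition, excluding the two degenerate cases in which one of the two parts is constant. So suppose $F=F_B+F_Z+c$ is a non-trivial relation of degree at most one vanishing at $u(1)$. Concretely this means
\[
\sum_{i=1}^n a_i\,\wp(p_i)+\sum_{j=1}^s b_j\,\rme^{t_j}+c=0,
\]
with $a_i,b_j,c\in\oQQ$ not all zero. Here $F_B=\sum_i a_iX_i$ and $F_Z=\sum_j b_jY_j$, read through the dictionary $X_i\mapsto x_i$ of the Notation section. We must show that neither $F_B$ nor $F_Z$ is constant, i.e. that some $a_i\neq0$ and some $b_j\neq0$.

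\emph{Case 1: $F_Z$ is constant,} so all $b_j=0$. The relation becomes $\sum_i a_i\wp(p_i)+c=0$. If all $a_i$ also vanished, non-triviality would force $c\neq0$, and $c=0$ would be a contradiction; hence some $a_i\neq0$. We then have a non-trivial polynomial of degree one over $\oQQ$ vanishing at $(\wp(p_1),\dots,\wp(p_n))$. This contradicts the Philippon--W\"ustholz Theorem, which applies because the $p_i$ are algebraic and $k$-linearly independent (equivalently, by the Notation section, the points $P_i$ are $k$-linearly independent) and $\Omega$ has algebraic invariants and complex multiplication.

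\emph{Case 2: $F_B$ is constant,} so all $a_i=0$. The same reasoning gives a non-trivial relation $\sum_j b_j\rme^{t_j}+c=0$ with some $b_j\neq0$. This contradicts the Lindemann--Weierstrass Theorem, since the $t_j$ are $\QQ$-linearly independent algebraic numbers.

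The only delicate point is bookkeeping. We must make sure that "constant" for $F_B\in B^\vee$ and $F_Z\in Z(1)^\vee$ really means that the corresponding linear form vanishes, so that any constant has been absorbed into $c$. This is guaranteed by the uniqueness of the decomposition in $\mathbf1\oplus B^\vee\oplus Z(1)^\vee$ from Lemma~\ref{linear-relations}. We must also make sure that evaluating a linear form at $u(1)$ returns exactly the linear combination of $\wp(p_i)$ and $\rme^{t_j}$ displayed above. Beyond this, the argument is an immediate appeal to the two classical theorems.
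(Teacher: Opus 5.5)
Your proposal is correct and follows the paper's own argument: if $F_Z$ (resp.\ $F_B$) is constant, you absorb it into the constant term and obtain a non-trivial relation among the $\wp(p_i)$ alone (resp.\ the $\rme^{t_j}$ alone), which contradicts the Philippon--W\"ustholz (resp.\ Lindemann--Weierstrass) theorem. You also check explicitly that non-triviality forces some $a_i\neq0$ (resp.\ $b_j\neq0$), a detail the paper leaves implicit.
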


\begin{proof}
	Suppose first that $F_Z$ is constant. Absorbing $F_Z+c$ into the
	constant term, $F$ becomes a polynomial relation involving only the
	elliptic variables. By the Philippon--W\"ustholz theorem, the numbers
	$	\wp(p_1),\dots,\wp(p_n)$
	are algebraically independent over $\oQQ$. Hence $F$ is trivial.
	
	Similarly, if $F_B$ is constant, then $F$ reduces to a polynomial
	relation involving only the toric variables. By the
	Lindemann--Weierstrass theorem,
	$
	\rme^{t_1},\dots,\rme^{t_s}$
	are algebraically independent over $\oQQ$, and therefore $F$ is again
	trivial.
	
	Consequently, if $F$ is non-trivial, both $F_B$ and $F_Z$ are
	non-constant.
\end{proof}

To deal with the remaining case $
F=F_B+F_Z+c,$
where both $F_B$ and $F_Z$ are non-constant, we need the following
Tannakian decomposition lemma. Consider the two $1$-motives
$
M_B=[u_B:\ZZ\rightarrow\cE^n]
$
and
$
M_Z=[u_Z:\ZZ\rightarrow\GG_m^s],
$
where
$
u_B(1)=b
$
and
$
u_Z(1)=z.
$
Applying the same Tannakian construction used for $M$ separately to
$M_B$ and $M_Z$, we obtain the corresponding ideals of 
polynomial relations
\[
I_b^\rho\subseteq
\operatorname{Sym}_{\mathbf1}(B^\vee)
\qquad\text{and}\qquad
I_z^\rho\subseteq
\operatorname{Sym}_{\mathbf1}(Z(1)^\vee).
\]

\begin{lemma}\label{decomposition-relations}
	Under the canonical identification
	$
	\operatorname{Sym}_{\mathbf1}(L^\vee)
	\simeq
	\operatorname{Sym}_{\mathbf1}(B^\vee)
	\otimes
	\operatorname{Sym}_{\mathbf1}(Z(1)^\vee),
	$
	one has
	\[
	I_u^\rho=I_b^\rho+I_z^\rho,
	\]
	where $I_b^\rho$ and $I_z^\rho$ denote the subobjects of
	$\rho$-compatible polynomial relations associated with $M_B$ and
	$M_Z$, respectively.
\end{lemma}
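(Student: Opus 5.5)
The plan is to reduce everything to degree $\leq 1$ and then separate the constant term by an invariance argument under the splitting $L=B\oplus Z(1)$.

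\emph{Step 1 (evaluation splits).} Since $L=B\oplus Z(1)$ and $u=(b,z)$, the vector bundle decomposes as $V(L^\vee)=V(B^\vee)\times V(Z(1)^\vee)$, so $\Gamma(L_T)=\Gamma(B_T)\times\Gamma(Z(1)_T)$. Under $\operatorname{Sym}_{\mathbf1}(L^\vee)\simeq\operatorname{Sym}_{\mathbf1}(B^\vee)\otimes\operatorname{Sym}_{\mathbf1}(Z(1)^\vee)$, the evaluation at $u$ should therefore be the tensor product of the two evaluations:
\[
\operatorname{Ev}_u=\mu\circ(\operatorname{Ev}_b\otimes\operatorname{Ev}_z),
\]
where $\mu$ is the multiplication of $C$. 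In particular, the restriction of $\operatorname{Ev}_u$ to $\operatorname{Sym}_{\mathbf1}(B^\vee)\otimes\mathbf1$ is $\operatorname{Ev}_b$, and similarly for $z$.

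\emph{Step 2 (identify the annihilators).} By Lemma~\ref{action}, $K_\rho\supseteq\operatorname{Sym}^{\geq2}_{\mathbf1}(L)$. The restriction of $\rho$ to $L$ is injective, since it recovers $u(1)$ by \eqref{eq:action2}. Hence $K_\rho=\operatorname{Sym}^{\geq2}_{\mathbf1}(L)$, and the argument of Lemma~\ref{linear-relations} gives
\[
\operatorname{Ann}_\phi(K_\rho)=\mathbf1\oplus L^\vee .
\]
The same reasoning applied to $M_B$ and $M_Z$ gives $I_b^\rho=I_b\cap(\mathbf1\oplus B^\vee)$ and $I_z^\rho=I_z\cap(\mathbf1\oplus Z(1)^\vee)$.

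\emph{Step 3 (the inclusion $\supseteq$).} Let $G\in I_b^\rho$. By Step 1, $\operatorname{Ev}_u(G\otimes1)=\operatorname{Ev}_b(G)=0$. Moreover $G$ has degree $\leq1$, so it lies in $\operatorname{Ann}_\phi(K_\rho)$. Thus $G\in I_u^\rho$, and symmetrically for $I_z^\rho$. This inclusion is formal.

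\emph{Step 4 (the inclusion $\subseteq$).} Let $F\in I_u^\rho$. By Lemma~\ref{linear-relations}, $F=F_B+F_Z+c$. Step 1 then gives
\[
\operatorname{Ev}_b(F_B)+\operatorname{Ev}_z(F_Z)+c=0 \quad\text{in } C.
\]
The task is to split $c=c_B+c_Z$ with $c_B,c_Z\in\mathbf1$ so that $\operatorname{Ev}_b(F_B)+c_B=0$ and $\operatorname{Ev}_z(F_Z)+c_Z=0$. Then $F_B+c_B\in I_b^\rho$ and $F_Z+c_Z\in I_z^\rho$, as required.

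To obtain this splitting I would use Proposition~\ref{theorem de B03}(1), which says that objects of $\langle M\rangle^\otimes$ are graded objects with an action of $L=B\oplus Z(1)$. Let $\beta:=\operatorname{Ev}_b(F_B)$. Since it depends only on $M_B$, the subobject of $C$ generated by $\beta$ is an $L$-module on which the $Z(1)$-summand acts trivially. Likewise, the subobject generated by $\operatorname{Ev}_z(F_Z)$ is killed by $B$. The displayed relation gives $\beta=-\operatorname{Ev}_z(F_Z)-c$, so $\beta$ is killed by both $Z(1)$ and $B$, that is, by all of $\Lie\UR(M)$. An object on which $\UR(M)$ acts trivially lies in $\langle\widetilde M\rangle^\otimes$. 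Since $\beta$ comes from a weight $\leq0$ evaluation, I would compare weights to show that the $\UR$-invariant part of the relevant piece of $C$ reduces to $\mathbf1$. This yields $\beta=-c_B$ with $c_B\in\mathbf1$, and then $c_Z:=c-c_B$.

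I expect this last invariance and weight argument to be the main obstacle. It is the only step that uses the splitting of the Lie algebra in an essential way rather than formally. One must check carefully that the triviality of the cross-pairings $B^\vee\otimes Z(1)\to\mathbf1$ and $Z(1)^\vee\otimes B\to\mathbf1$ (Lemma~\ref{derivation}) passes through the evaluation maps into $C$.
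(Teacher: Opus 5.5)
You have put the whole content of the lemma in Step 4, which is correct, but the invariance and weight argument you sketch there cannot supply it. This is a genuine gap. Steps 1--3 are fine. They are also all that the paper's own argument delivers. The paper invokes $\langle M\rangle^\otimes=\langle M_B,M_Z\rangle^\otimes$ and $L=L_B\oplus L_Z$ and asserts that every construction is compatible with the splitting. That yields $I_b^\rho+I_z^\rho\subseteq I_u^\rho$ and $I_u^\rho\cap(\mathbf1\oplus B^\vee)=I_b^\rho$, but it says nothing about an $F=F_B+F_Z+c\in I_u^\rho$ with $F_B$ and $F_Z$ both non-zero.

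Consider how the lemma is used: in the proof of Theorem~\ref{teo:main}, Philippon--W\"ustholz and Lindemann--Weierstrass are applied to $G_B$ and $G_Z$, which are numerical relations. So your hypothesis $\operatorname{Ev}_b(F_B)+\operatorname{Ev}_z(F_Z)+c=0$ is the numerical identity $\sum_i a_i\wp(p_i)+\sum_j b_j\rme^{t_j}+c=0$ with $a_i,b_j,c\in\oQQ$. The splitting $c=c_B+c_Z$ you need is equivalent to the statement that $\oQQ+\sum_i\oQQ\,\wp(p_i)$ and $\oQQ+\sum_j\oQQ\,\rme^{t_j}$ meet only in $\oQQ$. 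Together with Philippon--W\"ustholz and Lindemann--Weierstrass, this is exactly the $\oQQ$-linear independence of $1,\wp(p_1),\dots,\wp(p_n),\rme^{t_1},\dots,\rme^{t_s}$. That is the degree-one case of Theorem~\ref{teo:main} itself: a genuine transcendence statement about the mixed family, which cannot follow from the shape of $L=B\oplus Z(1)$.

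The step that fails is ``the subobject generated by $\beta$ is killed by $Z(1)$, and by the relation it is also killed by $B$.'' This treats an equality of complex numbers as an equality of morphisms in $\langle M\rangle^\otimes$. Converting numerical relations among periods into motivic relations, so that Galois invariance and weights can be applied, is precisely what the Grothendieck--Andr\'e period conjecture would provide. Unconditionally, only the converse implication (motivic relations give numerical ones) is available.

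If instead you read everything inside the Tannakian category, the invariance step is empty, since a morphism $\mathbf1\to C$ is automatically $\Galmot(M)$-invariant. Your weight comparison then does work. The subobject $\mathbf1+\operatorname{Im}(B^\vee\to C)$ has weights $0,1$ and $\mathbf1+\operatorname{Im}(Z(1)^\vee\to C)$ has weights $0,2$, so they meet in $\mathbf1$ and the identity of subobjects follows formally. But that version says nothing about whether $\sum_i a_i\wp(p_i)$ is algebraic, so it cannot feed the proof of the theorem.

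The cross-pairings $B^\vee\otimes Z(1)\to\mathbf1$ you raise at the end are also irrelevant. They govern only the differential action $\phi$ and impose no constraint on the values of $\operatorname{Ev}_u$ in $C$. What Step 4 actually needs is an external transcendence input for $\GG_m^s\times\cE^n$, namely linear disjointness or algebraic independence of the mixed values, not a formal Tannakian argument.
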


\begin{proof}
	By \cite[Lemma 2.2]{B19}  the $1$-motive
	$	M=[u:\ZZ\rightarrow\cE^n\times\GG_m^s],
	u=(1),$
	and the direct sum $M_B\oplus M_Z$ generate the same Tannakian
	category. More precisely,
	$
	\langle M\rangle^\otimes
	=
	\langle M_B\oplus M_Z\rangle^\otimes
	=
	\langle M_B,M_Z\rangle^\otimes.
	$
	Set $	L_B:=\Lie\UR(M_B)=B$ and 
		$	L_Z:=\Lie\UR(M_Z)=Z(1).$
		The corresponding decomposition of the Lie algebra of the unipotent
		radical is therefore
		$
		L=\Lie\UR(M)
		=
		L_B\oplus L_Z.
		$
		Dually,
		$
		L^\vee=L_B^\vee\oplus L_Z^\vee,
		$
		and hence
		\[
		\operatorname{Sym}_{\mathbf1}(L^\vee)
		\simeq
		\operatorname{Sym}_{\mathbf1}(L_B^\vee)
		\otimes
		\operatorname{Sym}_{\mathbf1}(L_Z^\vee).
		\]
		
		All the Tannakian constructions involved in the definition of the subobjects of $\rho$-compatible polynomial
		relations are functorial and compatible with
		this decomposition. Restricting the construction associated with
		$M$ to the factor $L_B^\vee$ gives precisely the construction associated
		with $M_B$, and hence the subobject  $I_b^\rho$. Similarly, its restriction
		to the factor $L_Z^\vee$ gives the construction associated with $M_Z$,
		and hence the subobject $I_z^\rho$.
		Conversely, the natural inclusions
		$
		L_B^\vee\hookrightarrow L^\vee $ and 
		$L_Z^\vee\hookrightarrow L^\vee
		$
		identify the polynomial relations associated with $M_B$ and $M_Z$
		with polynomial relations associated with $M$. 
		
		Therefore, under the
		canonical identification
		$
		\operatorname{Sym}_{\mathbf1}(L^\vee)
		\simeq
		\operatorname{Sym}_{\mathbf1}(L_B^\vee)
		\otimes
		\operatorname{Sym}_{\mathbf1}(L_Z^\vee),
		$
		we obtain
		$
		I_u^\rho=I_b^\rho+I_z^\rho.
		$
	\end{proof}

It remains to exclude the last possible case. By
Lemmas~\ref{linear-relations} and~\ref{separated-relations}, this
amounts to ruling out a non-trivial relation
\[
F=F_B+F_Z+c\in I_u^\rho
\]
with both $F_B$ and $F_Z$ non-constant. This follows from the
Tannakian decomposition of $\rho$-compatible relations established in
Lemma~\ref{decomposition-relations}.

\begin{proof}[Proof of Theorem~\ref{teo:main}]
Under the identification $\operatorname{Sym}_{\mathbf 1}(L^\vee)
\simeq
\operatorname{Sym}_{\mathbf 1}(B^\vee)
\otimes
\operatorname{Sym}_{\mathbf 1}(Z(1)^\vee)$, the polynomial $F=F_B+F_Z+c,$
with
$
F_B\in\operatorname{Sym}_{\mathbf 1}(B^\vee)
$
and
$
F_Z\in\operatorname{Sym}_{\mathbf 1}(Z(1)^\vee),
$
is identified with
\[
F_B\otimes 1
+
1\otimes F_Z
+
c(1\otimes1)
\]
in
$
\operatorname{Sym}_{\mathbf 1}(B^\vee)
\otimes
\operatorname{Sym}_{\mathbf 1}(Z(1)^\vee).
$
Moreover, the two subalgebras corresponding to the two factors
intersect only in the unit object:
\[
\bigl(
\operatorname{Sym}_{\mathbf 1}(B^\vee)\otimes\mathbf 1
\bigr)
\cap
\bigl(
\mathbf 1\otimes
\operatorname{Sym}_{\mathbf 1}(Z(1)^\vee)
\bigr)
=
\mathbf 1.
\]
Thus the only polynomial functions which belong simultaneously to
the $B$-part and to the $Z(1)$-part are the constant ones.
Since by Lemma~\ref{decomposition-relations}
$
I_u^\rho=I_b^\rho+I_z^\rho,$
there exist
$
G_B\in I_b^\rho
$
and
$
G_Z\in I_z^\rho
$
such that
\[
F=G_B+G_Z.
\]
On the other hand, $
F=F_B+F_Z+c,$
and hence
\[
F_B-G_B=-(F_Z+c-G_Z).
\]
The left-hand side belongs to
$
\operatorname{Sym}_{\mathbf1}(B^\vee)\otimes\mathbf1,
$
whereas the right-hand side belongs to
$
\mathbf1\otimes\operatorname{Sym}_{\mathbf1}(Z(1)^\vee).
$
By the above intersection property,
there exists $\lambda\in\mathbf1$ such that
\[
F_B-G_B=\lambda,
\qquad
F_Z+c-G_Z=-\lambda.
\]
Therefore
\[
F_B-\lambda = G_B\in I_b^\rho,
\qquad
F_Z+c+\lambda = G_Z\in I_z^\rho.
\]
By the Philippon--W\"ustholz Theorem, there are no non-trivial
polynomial relations involving only the elliptic variables. Hence
\[
F_B-\lambda=0,
\]
so that $F_B=\lambda$ is constant.
Similarly, by the Lindemann--Weierstrass Theorem, there are no non-trivial
polynomial relations involving only the toric variables. Therefore
\[
F_Z+c+\lambda=0,
\]
and hence $F_Z=-c-\lambda$ is constant. This contradicts Lemma~\ref{separated-relations}, according to which
both $F_B$ and $F_Z$ must be non-constant for any non-trivial
polynomial relation. Consequently, no non-trivial polynomial relation
$F\in I_u^\rho$ exists.
\end{proof}

%-------------------------------------------
\bibliographystyle{plain}

\end{document}